%
\documentclass[12pt, reqno]{amsart}
\usepackage{amsmath, amsthm, amscd, amsfonts, amssymb, graphicx, color}
\usepackage[bookmarksnumbered, colorlinks, plainpages]{hyperref}

\textheight 22.5truecm \textwidth 14.5truecm
\setlength{\oddsidemargin}{0.35in}\setlength{\evensidemargin}{0.35in}

\setlength{\topmargin}{-.5cm}

\newtheorem{theorem}{Theorem}[section]
\newtheorem{lemma}[theorem]{Lemma}
\newtheorem{proposition}[theorem]{Proposition}

\theoremstyle{definition}
\newtheorem{definition}[theorem]{Definition}

\theoremstyle{remark}
\newtheorem{remark}[theorem]{Remark}
\numberwithin{equation}{section}

\begin{document}


\title[Almost inner Rickart algebras]{On some counterparts of Rickart $*$-algebras}

\author[Arzikulov F.~N.]{Arzikulov F.~N.$^{1,2}$}

\address{$^{1}$ V.I. Romanovskiy Institute of Mathematics, Uzbekistan Academy of Sciences,
University 4, Olmazor, Tashkent, 100174, Uzbekistan,}

\address{$^{2}$ Andijan State University, University 129, Andijan, 170100, Uzbekistan}
\email{\textcolor[rgb]{0.00,0.00,0.84}{arzikulovfn@gmail.com}}

\author[Khakimov U.~I.]{Khakimov U.~I.$^2$}

\email{\textcolor[rgb]{0.00,0.00,0.84}{khakimov$_{-}$u@inbox.ru}}

\subjclass[2010]{16W10, 16E50, 16N40}

\keywords{Associative algebra; nilradical; nilpotent associative algebra; nilpotent element; idempotent}

\thanks{{\it Corresponding Author:} Arzikulov F.~N.}

\maketitle

\begin{abstract}
In the present paper, we introduce and study counterparts of Rickart involutive algebras, i.e., almost inner Rickart algebras.
We prove that a nilpotent associative algebra, which has no nilpotent elements with nonzero square roots, is an almost inner Rickart algebra.
A nilpotent associative algebra, which has no nilpotent elements with a square root $b$ such that $b^3\neq {\bf 0}$, is not an almost inner Rickart algebra if there exists a nonzero element $a$ such that $a^2\neq {\bf 0}$.

As a main result of the paper, we describe a finite-dimensional almost inner Rickart algebra $\mathcal{A}$ over a field $\mathbb{F}$, isomorphic to $\mathbb{F}^n\dot{+} \mathcal{N}$, $n=1,2$, with a nilradical $\mathcal{N}$.
Also, we classify finite-dimensional almost inner Rickart algebras over the real or complex numbers with a nonzero nilradical $\mathcal{N}$.
\end{abstract}

\medskip

\section{Introduction}

In the present paper, we introduce and study counterparts of Rickart $*$-algebras, that is almost inner Rickart algebras.
We define an almost inner Rickart algebra as an associative algebra which is a Jordan algebra with respect to the Jordan multiplication
$a\cdot b=\frac{1}{2}(ab+ba)$ close to inner RJ-algebras. Inner RJ-algebras are introduced and studied in the papers \cite{Ayupov_Arzikulov}, \cite{AA}, \cite{AKh}.

The chosen notions were built around an (inner) quadratic annihilator. For each nonempty subset $\mathcal{B}$ of an associative algebra $\mathcal{A}$, the (inner) quadratic annihilator of $\mathcal{B}$ is defined by
\[
^{\perp_q}\mathcal{B}:=\{a\in \mathcal{A}: sas={\bf 0}, \,\, \forall s\in \mathcal{B}\}.
\]
Thus, following \cite{GLPT}, an associative algebra $\mathcal{A}$ which is equal to the direct sum $\mathcal{S}\dot{+}\mathcal{N}$
of vector spaces $\mathcal{S}$ and $\mathcal{N}$, where $\mathcal{S}$ is a semisimple algebra and $\mathcal{N}$ is a nilpotent radical of $\mathcal{A}$ (including the case $\mathcal{N}\equiv\{{\bf 0}\}$),
is called an almost inner Rickart algebra if, for each element $x\in \mathcal{A}$, there exists an idempotent $e\in \mathcal{A}$ such that
$^{\perp_q}\{x\}\cap (\mathcal{S}^2\cup\mathcal{N}^2)=e\mathcal{A}e\cap (\mathcal{S}^2\cup\mathcal{N}^2)$, where $\mathcal{S}^2:=\{a^2: a\in \mathcal{S}\}$,
$\mathcal{N}^2:=\{a^2: a\in \mathcal{N}\}$.
There exist examples of almost inner Rickart algebras without unit element (cf. \cite[p. 32]{AA}). Note that, there exist pairwise non-isomorphic (associative) almost inner Rickart algebras, the Jordan algebras of which are isomorphic (see Examples in item 4) below). This is a motivation to introduce the notion of an almost inner Rickart algebra.

In the present paper, we prove that a nilpotent associative algebra $\mathcal{A}$
such that $a^2={\bf 0}$ for every element $a\in\mathcal{A}$ is an almost inner Rickart algebra.
We explain that a nilpotent associative algebra that has no nilpotent elements
with a square root $b$ such that $b^3\neq {\bf 0}$ is not an almost inner Rickart algebra
if there exists a nonzero element $a$ such that $a^2\neq {\bf 0}$.

As a main result of the paper we describe a finite-dimensional almost inner Rickart algebra $\mathcal{A}$ over a field $\mathbb{F}$, isomorphic to $\mathbb{F}^n\dot{+}\mathcal{N}$, $n=1,2$, with a nilradical $\mathcal{N}$ (Theorems  \ref{1.4} and \ref{2}).
Also, we classify finite-dimensional almost inner Rickart algebras over the real or complex numbers with a nonzero nilradical $\mathcal{N}$
(Theorem \ref{2.7}).

\section{Description of finite-dimensional almost inner Rickart algebras}

Throughout the paper let $\mathcal{A}$ be an associative algebra which is
decomposed (as a linear space) into a direct sum $\mathcal{S}\dot{+}\mathcal{N}$ of
a semisimple algebra $\mathcal{S}$ and a nilpotent radical $\mathcal{N}$ of $\mathcal{A}$.
For an associative algebra $\mathcal{A}$, we consider the following condition:

(A) for every element $x\in \mathcal{A}$ , there exists an idempotent $e\in \mathcal{A}$ such that
$$
^{\perp_q}\{x\}\cap (\mathcal{S}^2\cup\mathcal{N}^2)=e\mathcal{A}e\cap (\mathcal{S}^2\cup\mathcal{N}^2).
$$

{\bf Definition.}
An associative algebra satisfying condition (A) is called an almost inner Rickart algebra,
and condition (A) itself is called an almost inner Rickart condition.

{\bf Examples.}
1) It is obvious that every associative algebra which is an inner RJ-algebra with respect to
the multiplication $a\cdot b=\frac{1}{2}(ab+ba)$ is an almost inner Rickart algebra.

2) Let $M_n({\mathbb R})$ be the associative algebra of $n\times n$ matrices over the field ${\mathbb R}$
of real numbers, where ${\textbf{1}}$ is the unit of the algebra $M_n({\mathbb R})$, ${\{e_{i,j}\}^n_{i,j=1}}$ is the system of matrix units of the algebra $M_n({{\mathbb R}})$, $n=2k$.

a) Consider the associative algebra
$$
\mathcal{A}={\mathbb R}{\textbf{1}}+\sum_{i=1}^k {\mathbb R}e_{2i-1,2i}.
$$
This algebra is an almost inner Rickart algebra which is not involutive \cite[p. 30]{AA}.

b) Similarly, the associative algebra
$$
\mathcal{A}={\mathbb R}{\textbf{1}}+\sum_{i=1}^k {\mathbb R}e_{2i,2i-1},
$$
is an almost inner Rickart algebra.

c) Consider the associative algebra
$$
\mathcal{A}=\sum_{i=1}^k {\mathbb R}e_{2i-1,2i}
$$
This algebra is an almost inner Rickart algebra without a unit element \cite[Page 30, Example 3]{AA}. Indeed, since $\mathcal{S}^2\cup\mathcal{N}^2=\{{\bf 0}\}$, then
$$
^{\perp_q}\{{\bf 0}\}\cap (\mathcal{S}^2\cup\mathcal{N}^2)={\bf 0}(\mathcal{A}){\bf 0}\cap (\mathcal{S}^2\cup\mathcal{N}^2).
$$
Moreover, $\{e_{2i-1,2i}\}^{\perp_q} \cap (\mathcal{S}^2\cup\mathcal{N}^2)={\bf 0}(\mathcal{A}){\bf 0}\cap (\mathcal{S}^2\cup\mathcal{N}^2)$ for any $i$ from $\{1,2,\dots,k\}$.

d) Similar to Example c) the associative algebra
$$
\mathcal{A}=\sum_{i=1}^k {\mathbb R}e_{2i,2i-1}
$$
is also an almost inner Rickart algebra without a unit element.

3) A vector space $\mathcal{A}$ of dimension $n+1$
with a basis $\{e,e_1,e_2,\dots,e_n\}$ such that
\[
ee=e,\,\, ee_i=e_ie=e_i, \,\, e_i^2={\bf 0}, \,\, e_ie_j={\bf 0},  i,j=1,2,...,n.
\]
is a commutative associative algebra and, by Theorem \ref{1.4}, an almost inner Rickart algebra.

4) Let $\mathcal{A}$ be an associative algebra and $(\mathcal{A},\cdot)$ be the Jordan algebra
with respect to the multiplication $a\cdot b=\frac{1}{2}(ab+ba)$. Then,
the algebras $\mathcal{A}_2^2$, $\mathcal{A}_2^3$,
$\mathcal{A}_3^4$, $\mathcal{A}_3^6$,
$\mathcal{A}_4^8$, $\mathcal{A}_4^9$,
$\mathcal{A}_4^{19}$, $\mathcal{A}_4^{20}$ are almost inner Rickart algebras (see Tables 1 and 2) and the Jordan algebras
$(\mathcal{A}_2^2,\cdot)\cong (\mathcal{A}_2^3,\cdot)$,
$(\mathcal{A}_3^4,\cdot)\cong (\mathcal{A}_3^6,\cdot)$,
$(\mathcal{A}_4^8,\cdot)\cong (\mathcal{A}_4^9,\cdot)$,
$(\mathcal{A}_4^{19},\cdot)\cong (\mathcal{A}_4^{20},\cdot)$.
Thus, two mutually non-isomorphic almost inner Rickart algebras
give the same Jordan algebra. The examples above are a motivation to introduce the notion of an
almost inner Rickart algebra.

\begin{lemma}    \label{1.0}
Let $\mathcal{A}=\mathcal{S}\dot{+}\mathcal{N}$ be an almost inner Rickart algebra. Then
the algebra $\mathcal{A}$ has an element $e$ such that $ea^2=a^2e=a^2$ for every $a\in\mathcal{S}\cup\mathcal{N}$.
\end{lemma}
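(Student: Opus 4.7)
The plan is to invoke condition (A) for a single cleverly chosen element, namely $x=\mathbf{0}$, and then show that the idempotent produced by that instance of the condition already works uniformly for every square $a^2$.

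First I would unwind the definition of the quadratic annihilator at $\mathcal{B}=\{\mathbf{0}\}$: the condition ``$sas=\mathbf{0}$ for all $s\in\{\mathbf{0}\}$'' is trivially satisfied by every $a\in\mathcal{A}$, because $\mathbf{0}\cdot a\cdot \mathbf{0}=\mathbf{0}$. Hence $^{\perp_q}\{\mathbf{0}\}=\mathcal{A}$, and therefore
$$
^{\perp_q}\{\mathbf{0}\}\cap(\mathcal{S}^2\cup\mathcal{N}^2)=\mathcal{S}^2\cup\mathcal{N}^2.
$$
Applying condition (A) to $x=\mathbf{0}$, I obtain an idempotent $e\in\mathcal{A}$ such that
$$
\mathcal{S}^2\cup\mathcal{N}^2 \;=\; e\mathcal{A}e\cap(\mathcal{S}^2\cup\mathcal{N}^2).
$$
In particular $\mathcal{S}^2\cup\mathcal{N}^2\subseteq e\mathcal{A}e$.

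Given any $a\in\mathcal{S}\cup\mathcal{N}$, the inclusion just obtained gives $a^2=ebe$ for some $b\in\mathcal{A}$. Using $e^2=e$ twice I compute $ea^2=e(ebe)=e^2be=ebe=a^2$ and $a^2e=(ebe)e=ebe^2=ebe=a^2$, which is precisely the desired identity.

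There is no serious obstacle: the whole lemma rests on the observation that $\mathbf{0}$ is a valid choice of $x$ in (A), which makes its quadratic annihilator all of $\mathcal{A}$, so the idempotent produced in this particular instance of the almost inner Rickart condition is automatically a two-sided identity for every element of $\mathcal{S}^2\cup\mathcal{N}^2$ simultaneously. The only point worth double-checking is that $\mathbf{0}\in\mathcal{A}$ is an admissible input to condition (A) — which it is, since (A) is stated for \emph{every} element $x\in\mathcal{A}$.
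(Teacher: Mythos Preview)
Your proof is correct and follows essentially the same approach as the paper: apply condition (A) to $x=\mathbf{0}$, observe that $^{\perp_q}\{\mathbf{0}\}=\mathcal{A}$ so that $\mathcal{S}^2\cup\mathcal{N}^2\subseteq e\mathcal{A}e$, and then use $e^2=e$ to conclude that $e$ acts as a two-sided identity on every $a^2$. The paper's argument is identical, only slightly more terse.
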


\begin{proof}
Take $x={\bf 0}$. We have $^{\perp_q}\{x\}\cap (\mathcal{S}^2\cup\mathcal{N}^2)=e\mathcal{A}e\cap (\mathcal{S}^2\cup\mathcal{N}^2)$
for some idempotent $e$ in $\mathcal{A}$. But $^{\perp_q}\{x\}\cap (\mathcal{S}^2\cup\mathcal{N}^2)=
\mathcal{A}\cap (\mathcal{S}^2\cup\mathcal{N}^2)=\mathcal{S}^2\cup\mathcal{N}^2$. Hence $e\mathcal{A}e\cap (\mathcal{S}^2\cup\mathcal{N}^2)=\mathcal{S}^2\cup\mathcal{N}^2$.
Then $ea=e(ebe)=ebe=a$ and $ae=(ebe)e=ebe=a$ for all $a\in \mathcal{S}^2\cup\mathcal{N}^2$.
\end{proof}

\begin{theorem}    \label{1.2}
A nilpotent associative algebra that has no nilpotent elements with a nonzero square root is an almost inner Rickart algebra.
\end{theorem}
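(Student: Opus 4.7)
The plan is to reduce condition (A) to a triviality by showing that, under the hypothesis, the set $\mathcal{S}^2\cup\mathcal{N}^2$ collapses to $\{\mathbf{0}\}$.

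First, since $\mathcal{A}$ is nilpotent, its semisimple part vanishes, so $\mathcal{S}=\{\mathbf{0}\}$, $\mathcal{A}=\mathcal{N}$, and trivially $\mathcal{S}^2=\{\mathbf{0}\}$. Next I would rephrase the hypothesis: if some element $b\in\mathcal{A}$ satisfied $b^2\neq\mathbf{0}$, then $b^2$ would itself be a nonzero nilpotent element of $\mathcal{A}$ possessing the nonzero square root $b$, contradicting the assumption. Therefore $b^2=\mathbf{0}$ for every $b\in\mathcal{A}$, which gives $\mathcal{N}^2=\{\mathbf{0}\}$, and consequently $\mathcal{S}^2\cup\mathcal{N}^2=\{\mathbf{0}\}$.

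With this observation the verification of (A) is immediate. Fix any $x\in\mathcal{A}$. The left-hand side of the equality required in condition (A) is
\[
{}^{\perp_q}\{x\}\cap(\mathcal{S}^2\cup\mathcal{N}^2)={}^{\perp_q}\{x\}\cap\{\mathbf{0}\}=\{\mathbf{0}\},
\]
and for the right-hand side I would exhibit the idempotent $e=\mathbf{0}$ (exactly as used in Example 2c), giving
\[
e\mathcal{A}e\cap(\mathcal{S}^2\cup\mathcal{N}^2)=\mathbf{0}\,\mathcal{A}\,\mathbf{0}\cap\{\mathbf{0}\}=\{\mathbf{0}\}.
\]
The two sides agree, so $\mathcal{A}$ satisfies (A) and is an almost inner Rickart algebra.

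The only genuine obstacle is the correct unpacking of the hypothesis; once one sees that it is equivalent to the identity $b^2=\mathbf{0}$ for all $b\in\mathcal{A}$, no structural analysis of $\mathcal{A}$ and no nontrivial choice of idempotent is necessary. Note that this dovetails with the informal statement in the introduction (\emph{``$a^2=\mathbf{0}$ for every element $a\in\mathcal{A}$''}) and with Lemma~\ref{1.0}, which under the present hypothesis is satisfied vacuously by $e=\mathbf{0}$.
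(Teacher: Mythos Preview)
Your proof is correct and follows essentially the same route as the paper: reinterpret the hypothesis as the identity $b^2=\mathbf{0}$ for all $b\in\mathcal{A}$, deduce $\mathcal{S}^2\cup\mathcal{N}^2=\{\mathbf{0}\}$, and verify condition~(A) for every $x$ with the idempotent $e=\mathbf{0}$. The paper additionally records the consequences $ab+ba=\mathbf{0}$ and $aba=\mathbf{0}$, but does not use them in the verification of~(A), so your omission of them is harmless.
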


\begin{proof}
Let $\mathcal{A}$ be a nilpotent associative algebra that has no nilpotent elements with a nonzero square root.
Let $a$ be a nonzero element from the algebra $\mathcal{A}$. Since $\mathcal{A}$ is nilpotent, then $a^m={\bf 0}$ for some element $m$.
Then $a^2={\bf 0}$. Otherwise, the algebra $\mathcal{A}$ contains an element $b$, which is a nilpotent element with a nonzero square root.
So, $a^2={\bf 0}$ for every non-zero element $a$ from $\mathcal{A}$. Therefore, for any $a$, $b$ from $\mathcal{A}$,
$$
ab+ba=(a+b)^2-a^2-b^2={\bf 0},\,\,\, aba+baa=aba={\bf 0}.
$$

The algebra $\mathcal{A}$ is an almost inner Rickart algebra.
Indeed, the algebra $\mathcal{A}$ does not contain a nonzero idempotent. In particular, there is no non-zero idempotent $e$ such that
$$
^{\perp_q}\{{\bf 0}\}=e\mathcal{A}e.
$$
However, since $\mathcal{S}^2\cup\mathcal{N}^2=\{{\bf 0}\}$, we have
$$
^{\perp_q}\{{\bf 0}\}\cap (\mathcal{S}^2\cup\mathcal{N}^2)={\bf 0}\mathcal{A}{\bf 0}\cap (\mathcal{S}^2\cup\mathcal{N}^2).
$$
Here $^{\perp_q}\{{\bf 0}\}=\mathcal{A}$. Similarly, for any element $a$ from $\mathcal{A}$, we have
$$
^{\perp_q}\{a\}\cap (\mathcal{S}^2\cup\mathcal{N}^2)=\{{\bf 0}\}={\bf 0}\mathcal{A}{\bf 0}\cap (\mathcal{S}^2\cup\mathcal{N}^2).
$$
The proof is complete.
\end{proof}

\begin{theorem}    \label{1.3}
Let $\mathcal{A}$ be a nilpotent associative algebra.
Then $\mathcal{A}$ is an almost inner Rickart algebra if and only if,
for any element $a$ in $\mathcal{A}$, $a^2={\bf 0}$.
\end{theorem}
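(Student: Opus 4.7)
The plan is to reduce everything to two observations about a nilpotent associative algebra $\mathcal{A}$: first, $\mathcal{S}=\{{\bf 0}\}$ and $\mathcal{N}=\mathcal{A}$; and second, the only idempotent of $\mathcal{A}$ is ${\bf 0}$ itself (an idempotent $e$ satisfies $e=e^{k}$ for every $k\ge 1$, but some power $\mathcal{A}^{k}$ already vanishes). Under this reduction, condition~(A) becomes a comparison between the set of squares $\mathcal{N}^{2}=\{b^{2}:b\in\mathcal{A}\}$ and the only available corner $e\mathcal{A}e$, which is forced to be $\{{\bf 0}\}$.

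For the backward direction I would mimic the opening computation in the proof of Theorem~\ref{1.2}: if $a^{2}={\bf 0}$ for every $a\in\mathcal{A}$, then polarizing $(a+b)^{2}={\bf 0}$ gives $ab+ba={\bf 0}$, and therefore $aba={\bf 0}$ for all $a,b\in\mathcal{A}$. Consequently $^{\perp_{q}}\{x\}=\mathcal{A}$ for every $x$, while $\mathcal{S}^{2}\cup\mathcal{N}^{2}=\{{\bf 0}\}$, so the choice $e={\bf 0}$ verifies condition~(A) for every $x$.

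For the forward direction I would apply condition~(A) to $x={\bf 0}$. Then $^{\perp_{q}}\{{\bf 0}\}=\mathcal{A}$, so the left-hand side equals $\mathcal{A}\cap(\mathcal{S}^{2}\cup\mathcal{N}^{2})=\mathcal{N}^{2}$; the idempotent provided by condition~(A) must be ${\bf 0}$ by the opening observation, so the right-hand side reduces to ${\bf 0}\mathcal{A}{\bf 0}\cap(\mathcal{S}^{2}\cup\mathcal{N}^{2})=\{{\bf 0}\}$. Equating the two yields $b^{2}={\bf 0}$ for every $b\in\mathcal{A}$. Alternatively, one can invoke Lemma~\ref{1.0} directly: its idempotent is forced to be ${\bf 0}$, and then the identity $ea^{2}=a^{2}$ immediately gives $a^{2}={\bf 0}$.

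I do not foresee a substantive obstacle; both directions fit in a handful of lines, and the entire argument rests on the elementary fact that a nilpotent algebra contains no nonzero idempotent. The only minor point worth flagging is that in the backward direction I would prefer to rerun the short polarization rather than cite Theorem~\ref{1.2} verbatim, because when $a^{2}={\bf 0}$ holds identically the zero element itself is a nilpotent element admitting nonzero square roots, so applying Theorem~\ref{1.2} depends on the usual convention that ``nilpotent element'' tacitly means ``nonzero nilpotent element''.
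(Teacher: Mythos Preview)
Your proposal is correct, and the backward direction is essentially the paper's argument (the paper simply cites Theorem~\ref{1.2}, whose proof is precisely the polarization you reproduce). Your remark about the convention on ``nilpotent element'' in Theorem~\ref{1.2} is a fair caveat, and rerunning the two-line computation avoids it cleanly.

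The forward direction, however, is where you differ from the paper. The paper argues by contradiction: assuming some $a$ has $a^{2}\neq{\bf 0}$, it takes $k$ with $a^{k}\neq{\bf 0}$, $a^{k+1}={\bf 0}$, observes that $(a^{[k/2]})^{2}$ is a nonzero square lying in $^{\perp_{q}}\{a\}\cap\mathcal{N}^{2}$, and concludes that condition~(A) would then require a \emph{nonzero} idempotent, which cannot exist. Your approach is more direct: you apply condition~(A) once at $x={\bf 0}$, note that the only available idempotent is ${\bf 0}$, and read off $\mathcal{N}^{2}=\{{\bf 0}\}$ immediately (equivalently, you invoke Lemma~\ref{1.0} and kill $a^{2}=ea^{2}={\bf 0}\cdot a^{2}$). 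Both routes hinge on the same elementary fact that a nilpotent algebra has no nonzero idempotent, but yours bypasses the construction of the auxiliary element $(a^{[k/2]})^{2}$ and the attendant index bookkeeping. The paper's argument, on the other hand, has the mild advantage of localizing the obstruction at a specific offending element $a$, which is closer in spirit to how condition~(A) is used elsewhere in the paper.
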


\begin{proof}
Suppose that $\mathcal{A}$ is an almost inner Rickart algebra.
Let $a$ be an arbitrary nonzero element from the algebra $\mathcal{A}$ such that $a^2\neq {\bf 0}$.
Since the algebra $\mathcal{A}$ is nilpotent, $a$ is a nilpotent element.
Then $a^k\neq {\bf 0}$, $a^{k+1}={\bf 0}$ for some natural number $k\geq 2$. Then $(a^{[\frac{k}{2}]})^2\neq {\bf 0}$ and
$$
(a^{[\frac{k}{2}]})^2\in ^{\perp_q}\{a\}\cap (\mathcal{S}^2\cup\mathcal{N}^2).
$$
Hence, $^{\perp_q}\{a\}\cap (\mathcal{S}^2\cup\mathcal{N}^2)\neq\{{\bf 0}\}$. So,
there is a nonzero idempotent $e$ such that
$$
^{\perp_q}\{a\}\cap (\mathcal{S}^2\cup\mathcal{N}^2)=e\mathcal{A}e\cap (\mathcal{S}^2\cup\mathcal{N}^2).
$$
But there is no nonzero idempotent in the algebra $A$ since the algebra $\mathcal{A}$ is nilpotent.
Therefore, $\mathcal{A}$ is not an almost inner Rickart associative algebra.
This is a contradiction. So, $a^2={\bf 0}$.

Now, suppose that for any element $a$ in $\mathcal{A}$, $a^2={\bf 0}$. Then, by Theorem
\ref{1.2} $\mathcal{A}$ is an almost inner Rickart algebra. This ends the proof.
\end{proof}

\begin{lemma}    \label{1.41}
Let $\mathcal{A}$ be a finite-dimensional almost inner Rickart algebra
over a field $\mathbb{F}$ of characteristic $\neq 2$ and $\neq 3$ with a one-dimensional simple subalgebra $\mathcal{S}$ and
an $n$-dimensional commutative nilpotent radical $\mathcal{N}$ such that $\mathcal{A}=\mathcal{S}\dot{+}\mathcal{N}$, $\mathcal{N}^2\neq\{{\bf 0}\}$ and $\mathcal{N}^3=\{a^3: a\in\mathcal{N}\}=\{{\bf 0}\}$.
Then, for any idempotent $e\in\mathcal{A}$ such that
$^{\perp_q}\{x\}\cap (\mathcal{S}^2\cup\mathcal{N}^2)=e\mathcal{A}e\cap (\mathcal{S}^2\cup\mathcal{N}^2)$,
$e\in \mathcal{S}$, and, for any elements $a,b,c$ from $\mathcal{N}$,
$$
abc={\bf 0}. \eqno{(1.0)}
$$
\end{lemma}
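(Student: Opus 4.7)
The plan is to establish the triple product identity $abc = \mathbf{0}$ in $\mathcal{N}$ first, then use it together with the canonical idempotent produced by Lemma~\ref{1.0} to pin the Rickart idempotent $e$ down to $\mathcal{S}$.

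For the identity $abc = \mathbf{0}$, I would polarize the hypothesis $a^3 = \mathbf{0}$ on the commutative nilradical. Substituting $a \mapsto a + b$ in $a^3 = \mathbf{0}$ and exploiting commutativity of $\mathcal{N}$ gives $3(a^2 b + ab^2) = \mathbf{0}$, hence $a^2 b + ab^2 = \mathbf{0}$ since $\operatorname{char}\mathbb{F} \neq 3$. A further substitution $a \mapsto a + c$ in this identity, combined with the analogous identity for the pair $(c,b)$, reduces to $2\, acb = \mathbf{0}$; since $\operatorname{char}\mathbb{F} \neq 2$ and $\mathcal{N}$ is commutative, this gives $abc = \mathbf{0}$.

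For $e \in \mathcal{S}$, since $\mathcal{S}$ is one-dimensional simple, I write $\mathcal{S} = \mathbb{F} e_0$ with $e_0$ a nonzero idempotent, and decompose any idempotent $e \in \mathcal{A}$ as $e = \lambda e_0 + m$ with $\lambda \in \mathbb{F}$ and $m \in \mathcal{N}$. Projecting onto $\mathcal{A}/\mathcal{N} \cong \mathcal{S}$ forces $\lambda \in \{0,1\}$; the case $\lambda = 0$ gives $e = m$ a nilpotent idempotent, so $e = \mathbf{0} \in \mathcal{S}$, and I focus on $\lambda = 1$, $e = e_0 + m$, aiming for $m = \mathbf{0}$. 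By Lemma~\ref{1.0} applied at $x = \mathbf{0}$, there is an idempotent $e' = e_0 + m'$ satisfying $e' a^2 = a^2 e' = a^2$ for all $a \in \mathcal{S} \cup \mathcal{N}$. Specialising at $a = e_0$ yields $e_0 m' = m' e_0 = \mathbf{0}$, whence idempotency of $e'$ collapses to $(m')^2 = m'$, so $m' = \mathbf{0}$ by nilpotency; thus $e' = e_0$, and the property $e' n^2 = n^2 e' = n^2$ becomes $e_0 n^2 = n^2 e_0 = n^2$ for every $n \in \mathcal{N}$, placing every square of $\mathcal{N}$ in the Peirce $(1,1)$-corner of $e_0$.

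The concluding step returns to the general Rickart idempotent $e = e_0 + m$. Using $abc = \mathbf{0}$, commutativity of $\mathcal{N}$, and the corner property just established, I simplify the mixed term to $m e_0 m = (m e_0) m = m (m e_0) = m^2 e_0 = m^2$. The Rickart relation $e e_0 e = e_0$, which follows from $e_0 \in \mathcal{S}^2 \cap e\mathcal{A}e$, expands to $e_0 m + m e_0 + m^2 = \mathbf{0}$; subtracting the idempotency identity $e_0 m + m e_0 + m^2 = m$ yields $m = \mathbf{0}$, so $e = e_0 \in \mathcal{S}$. The main obstacle will be justifying the containment $e_0 \in e \mathcal{A} e$ for a general $x$, equivalently $x e_0 x = \mathbf{0}$: this is automatic at $x = \mathbf{0}$ via Lemma~\ref{1.0}, but for arbitrary $x$ it will likely require either a uniqueness-of-Rickart-idempotent argument (pinning any such $e$ to the canonical one produced by Lemma~\ref{1.0}) or a direct exploitation of the almost inner Rickart condition at several auxiliary elements. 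Given the simplifications $m^k = \mathbf{0}$ for $k \geq 3$ and the forced Peirce-$(1,1)$ position of $\mathcal{N}^2$, the remaining algebra is essentially formal.
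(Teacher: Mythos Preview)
Your polarization argument for $abc=\mathbf{0}$ is correct and close to the paper's; the paper expands both $(a+b)^3$ and $(a-b)^3$ to isolate $a^2b=\mathbf{0}$ first and then writes $abc=\tfrac12\,a\bigl((b+c)^2-b^2-c^2\bigr)$, but the two routes are equivalent.

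The obstacle you flag in the second half is not a technical detail to be filled in later---it is equivalent to the conclusion. For a nonzero idempotent $e=e_0+m$, the containment $e_0\in e\mathcal{A}e$ forces $ee_0=e_0=e_0e$, hence $e_0m=me_0=\mathbf{0}$, and then idempotency gives $m^2=m$, so $m=\mathbf{0}$. Put differently, your own identities yield $ee_0e=e_0+(e_0m+me_0+m^2)=e_0+m=e$ for \emph{every} such $e$, so $ee_0e=e_0$ holds only when already $e=e_0$. Moreover, for $x=a\in\mathcal{N}$ your established facts give $ae_0a=(ae_0)a=a(ae_0)=a^2e_0=a^2$, so the route via $e_0\in{}^{\perp_q}\{a\}$ is blocked precisely when $a^2\neq\mathbf{0}$. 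The paper treats only $x=a\in\mathcal{N}$ and makes exactly this case split: if $\mathcal{S}^2\cap{}^{\perp_q}\{a\}\neq\{\mathbf{0}\}$ (equivalently $apa=\mathbf{0}$), then $p\in e\mathcal{A}e$ and one concludes $e=p$ essentially as you do; if $\mathcal{S}^2\cap{}^{\perp_q}\{a\}=\{\mathbf{0}\}$, the paper does \emph{not} show that the given $e$ lies in $\mathcal{S}$, but instead uses $\mathcal{N}^2\subseteq e\mathcal{A}e$ together with $b\cdot\mathcal{N}^2=\mathbf{0}$ (from $abc=\mathbf{0}$) to deduce $p\mathcal{N}^2=\mathcal{N}^2=\mathcal{N}^2p$, and then argues that $p$ itself satisfies the same Rickart equation, so that one may \emph{replace} $e$ by $p\in\mathcal{S}$. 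Neither a uniqueness-of-idempotent argument nor auxiliary evaluations of the Rickart condition will produce $e_0\in e\mathcal{A}e$ in this second case; the case distinction is the missing idea.
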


\begin{proof}
By the condition, the associative algebra $\mathcal{N}$ has no nilpotent elements $a$
such that $a^3\neq {\bf 0}$. So, for any nonzero element $a$ from the nilradical $\mathcal{N}$ we have
$a^3={\bf 0}$. Therefore, for any $a, b$ from $\mathcal{N}$,
$$
{\bf 0}=(a+b)^3-a^3-b^3=(a^2+ab+ba+b^2)(a+b)-a^3-b^3,
$$
$$
=a^3+a^2b+aba+ab^2+ba^2+bab+b^2a+b^3-a^3-b^3=3a^2b+3b^2a,
$$
$$
{\bf 0}=(a-b)^3-a^3+b^3=(a^2-ab-ba+b^2)(a-b)-a^3+b^3,
$$
$$
=a^3-a^2b-aba+ab^2-ba^2+bab+b^2a-b^3-a^3-b^3=-3a^2b+3b^2a.
$$
Hence,
\[
a^2b={\bf 0}, b^2a={\bf 0}.
\]

As well as,
$$
abc=a\frac{1}{2}((b+c)^2-b^2-c^2)=\frac{1}{2}(a(b+c)^2-ab^2-ac^2)={\bf 0}.
$$
So, for any elements $a,b,c$ from $\mathcal{N}$, equality (1.0) is valid.

Let $a$ be an arbitrary nonzero element from the radical $\mathcal{N}$.
Then there exists a nonzero idempotent $e\in \mathcal{A}$ such that
$$
{^{\perp_q}\{a\}}\cap (\mathcal{S}^2\cup\mathcal{N}^2)=e\mathcal{A}e\cap (\mathcal{S}^2\cup\mathcal{N}^2).
$$
Due to (1.0) we have $\mathcal{N}^2\subseteq ^{\perp_q}\{a\}\cap (\mathcal{S}^2\cup\mathcal{N}^2)$. Hence,
$$
\mathcal{N}^2\subseteq e\mathcal{A}e\cap (\mathcal{S}^2\cup\mathcal{N}^2).
$$

Since $\mathcal{S}$ is a one-dimensional simple algebra, there exists an idempotent $p$ in $\mathcal{S}$
such that $\mathcal{S}=\mathbb{F}p$.
There exist $b\in\mathcal{N}$ and nonzero $x\in \mathcal{S}$ such that $e=x+b$. We have
\[
e=e^2=(x+b)^2=x^2+xb+bx+b^2, x=x^2, b=xb+bx+b^2.
\]
So, $x=p$.

If there exists $y\in\mathcal{S}^2$ such that $y\in{^{\perp_q}\{a\}}$, then
$p\in {^{\perp_q}\{a\}}$, i.e., $apa={\bf 0}$.
Hence, $p\in e(\mathcal{S}^2\cup\mathcal{N}^2)e$, i.e., $ep=pe=p$.
So, $(p+b)p=p(p+b)=p$. But $p(p+b)=(p+b)p=p+bp=p+pb$ and $pb$, $bp\in\mathcal{N}$. Hence, $bp=pb={\bf 0}$.
Therefore, $b=pb+bp+b^2=b^2$. The fact that $b^2\neq {\bf 0}$ contradicts $b\in \mathcal{N}$ and that $\mathcal{N}$ is nilpotent.
So, $b^2={\bf 0}$ and, hence, $b={\bf 0}$. From $b={\bf 0}$ it follows that $e=p$, i.e., $e\in\mathcal{S}$.

Suppose that $\mathcal{S}^2\cap {^{\perp_q}\{a\}}=\{{\bf 0}\}$. Then
$$
{^{\perp_q}\{a\}}\cap (\mathcal{S}^2\cup\mathcal{N}^2)=e\mathcal{A}e\cap (\mathcal{S}^2\cup\mathcal{N}^2)
=e\mathcal{A}e\cap \mathcal{N}^2=\mathcal{N}^2.
$$
But, for any $a\in\mathcal{N}$, we have $ea^2e=ea^2=a^2$, i.e., $(p+b)a^2=pa^2+ba^2=pa^2=a^2$ by (1.0). Similarly, $a^2p=a^2$.
Hence,
$$
e\mathcal{A}e\cap \mathcal{N}^2=\mathcal{N}^2=p\mathcal{N}^2p=p\mathcal{A}p\cap \mathcal{N}^2.
$$
Therefore, in this case we have
$$
{^{\perp_q}\{a\}}\cap (\mathcal{S}^2\cup\mathcal{N}^2)=p\mathcal{A}p\cap (\mathcal{S}^2\cup\mathcal{N}^2)
$$
and instead of $e$ we can take $p$. The proof is complete.
\end{proof}

\begin{theorem}    \label{1.4}
Let $\mathcal{A}$ be a finite-dimensional almost inner Rickart algebra
over a field $\mathbb{F}$ of characteristic $\neq 2$ and $\neq 3$ with a one-dimensional simple subalgebra $\mathcal{S}$ and
an $n$-dimensional commutative nilpotent radical $\mathcal{N}$ such that $\mathcal{A}=\mathcal{S}\dot{+}\mathcal{N}$, $\mathcal{N}^2\neq\{{\bf 0}\}$ and $\mathcal{N}^3=\{{\bf 0}\}$.
Then there is a nonzero idempotent $e\in \mathcal{A}$ such that $\mathcal{A}=\mathbb{F}e\dot{+}\mathcal{N}$ and
for any basis $\{e_1,e_2,\dots,e_n\}$ of $\mathcal{N}$ the following conditions are valid
$$
e_ie_je_k={\bf 0}, \\ \\ \ i,j,k=1,2,\ldots,n,             \eqno{(1.1)}
$$
$$
e(e_ie_j)=(e_ie_j)e=e_ie_j, \ \\ \ i,j=1,2,\ldots,n,       \eqno{(1.2)}
$$
$$
e_iee_j+e_jee_i={\bf 0},\\ \\ \ i,j=1,2,\ldots,n,                 \eqno{(1.3)}
$$
$$
ee_i\in \mathcal{N}, \,\, e_ie\in\mathcal{N}.          \eqno{(1.4)}
$$
Conversely, any associative algebra
$\mathcal{A}$ over a field $\mathbb{F}$ of characteristic $\neq 2$ and $\neq 3$
with a one-dimensional simple subalgebra $\mathcal{S}=\mathbb{F}e$, where $e$ is an idempotent element, and
an $n$-dimensional commutative nilpotent radical $\mathcal{N}$ with a basis $\{e_1,e_2,\dots,e_n\}$ such that
$\mathcal{A}=\mathcal{S}\dot{+}\mathcal{N}$, is an almost inner Rickart algebra if
the conditions (1.1)--(1.4) are valid.
\end{theorem}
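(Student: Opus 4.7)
The plan is to split the proof into the forward implication (AIR $\Rightarrow$ (1.1)--(1.4)) and the converse. The forward direction rests on Lemma~\ref{1.41}, while the converse is a direct verification of the AIR condition on $x \in \mathcal{A}$, split according to the $\mathcal{S}$-component of $x$.

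For the forward direction, Lemma~\ref{1.41} already supplies (1.1) (its equation (1.0)) and shows that any idempotent realizing the AIR condition for a nonzero radical element lies in $\mathcal{S} = \mathbb{F}p$, so it is either $\mathbf{0}$ or $p$. Setting $e := p$ gives the required decomposition $\mathcal{A} = \mathbb{F}e \dot{+} \mathcal{N}$. Condition (1.4) is immediate, since the nilpotent radical is automatically a two-sided ideal. For (1.2), Lemma~\ref{1.0} (applied with $x = \mathbf{0}$) gives $e \cdot a^2 = a^2 \cdot e = a^2$ for every $a \in \mathcal{N}$; polarizing via $e_ie_j = \tfrac{1}{2}\bigl((e_i+e_j)^2 - e_i^2 - e_j^2\bigr)$ (char $\neq 2$) extends this to all products $e_ie_j$.

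The one substantive step is (1.3). I would apply the AIR condition to $x := e_i$: thanks to (1.1), all of $\mathcal{N}^2$ sits in ${}^{\perp_q}\{e_i\}$, so the associated idempotent cannot be $\mathbf{0}$ and must therefore be $e$ by Lemma~\ref{1.41}. Since $e \in e\mathcal{A}e \cap \mathcal{S}^2$, matching the two intersections forces $e \in {}^{\perp_q}\{e_i\}$, i.e.\ $e_i e e_i = \mathbf{0}$. Replaying the argument with $x := e_i + e_j$ gives $(e_i + e_j)\, e\, (e_i + e_j) = \mathbf{0}$; subtracting the two diagonal terms (which already vanish) yields (1.3).

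For the converse, write an arbitrary $x \in \mathcal{A}$ as $x = \alpha e + n$ with $n \in \mathcal{N}$ and split on $\alpha$. When $\alpha = 0$, expanding $xax$ for $a \in \mathcal{S}^2 \cup \mathcal{N}^2$ reduces via (1.1) to a scalar multiple of $nen$; writing $n$ in the basis and invoking (1.3) (plus char $\neq 2$) gives $nen = \mathbf{0}$, so ${}^{\perp_q}\{x\} \cap (\mathcal{S}^2 \cup \mathcal{N}^2) = \mathcal{S}^2 \cup \mathcal{N}^2$, which equals $e\mathcal{A}e \cap (\mathcal{S}^2 \cup \mathcal{N}^2)$ by (1.2) and (1.4). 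When $\alpha \neq 0$, expanding $x b^2 x$ reduces modulo (1.1) to $\alpha^2 b^2$, while the $\mathcal{S}$-component of $xex$ is $\alpha^2 e \neq \mathbf{0}$ (the $\mathcal{N}$-cross-terms are absorbed via (1.3) and (1.4)); both vanish only trivially, so ${}^{\perp_q}\{x\} \cap (\mathcal{S}^2 \cup \mathcal{N}^2) = \{\mathbf{0}\}$ and the choice $f = \mathbf{0}$ works. I expect the main obstacle to be the disciplined bookkeeping of cross-terms in $(\alpha e + n)\, a\, (\alpha e + n)$ so that each of (1.1), (1.3), (1.4) is invoked at the right place; the algebraic content, once organized, is routine.
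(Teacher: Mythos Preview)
Your proposal is correct and follows essentially the same approach as the paper: Lemma~\ref{1.41} for (1.1) and the location of the idempotent, the AIR condition on radical elements to get $aea=\mathbf{0}$ and then polarize for (1.3), the ideal property for (1.4), and the same two-case split on the $\mathcal{S}$-component for the converse. The only minor deviation is that you obtain (1.2) from Lemma~\ref{1.0} rather than, as the paper does, from the inclusion $\mathcal{N}^2\subseteq e\mathcal{A}e$ coming from the AIR condition on $a\in\mathcal{N}$; just note that the idempotent produced by Lemma~\ref{1.0} must be identified with $p$ (which follows since any nonzero idempotent in $\mathcal{A}$ has $\mathcal{S}$-part $p$ and the $\mathcal{N}$-part is killed by (1.0)).
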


\begin{proof}
Since $\mathcal{N}^3=\{{\bf 0}\}$, we have,
by Lemma  \ref{1.41}, for any elements $a,b,c$ from $\mathcal{N}$,
$$
abc={\bf 0}. \eqno{(1.5)}
$$

Since $\mathcal{N}^2\neq\{{\bf 0}\}$, there exists $x\in \mathcal{N}$ such that $x^2\neq {\bf 0}$.
Let $a$ be an arbitrary nonzero element from the radical $\mathcal{N}$.
Then, by Lemma \ref{1.41} and (1.5), there exists a nonzero idempotent $e\in \mathcal{S}$ such that
$$
x^2\in^{\perp_q}\{a\}\cap (\mathcal{S}^2\cup\mathcal{N}^2)=e\mathcal{A}e\cap (\mathcal{S}^2\cup\mathcal{N}^2).
$$
We also have $e\in e\mathcal{A}e$. Since $e$ is idempotent, we have
$e\in\mathcal{S}^2$. Hence, $e\in e\mathcal{A}e\cap (\mathcal{S}^2\cup\mathcal{N}^2)$, i.e.,
$e\in ^{\perp_q}\{a\}\cap (\mathcal{S}^2\cup\mathcal{N}^2)$. From this it follows that $aea={\bf 0}$.

Due to (1.5) we have $\mathcal{N}^2\subseteq ^{\perp_q}\{a\}\cap (\mathcal{S}^2\cup\mathcal{N}^2)$. Hence,
$$
\mathcal{N}^2\subseteq e\mathcal{A}e\cap (\mathcal{S}^2\cup\mathcal{N}^2).
$$

Let $b$ be another nonzero element from the nilradical $\mathcal{N}$.
Then there exists a nonzero idempotent $f\in \mathcal{S}$ such that
$$
x^2\in^{\perp_q}\{b\}\cap (\mathcal{S}^2\cup\mathcal{N}^2)=f\mathcal{A}f\cap (\mathcal{S}^2\cup\mathcal{N}^2)
$$
and
$$
\mathcal{N}^2\subseteq f\mathcal{A}f\cap (\mathcal{S}^2\cup\mathcal{N}^2)
$$
Thus, $ea^2=fa^2=a^2$, $eb^2=fb^2=b^2$. Furthermore,
$$
ec=fc=c, \,\, \, c\in \mathcal{N}^2.
$$
Since codimension of $\mathcal{N}$ equals $1$ we have $e=f$.
If we take $a+b$ then
$$
a^2,\,\, b^2,\,\, (a+b)^2\in^{\perp_q}\{a\}\cap (\mathcal{S}^2\cup\mathcal{N}^2)=e\mathcal{A}e\cap (\mathcal{S}^2\cup\mathcal{N}^2),
$$
$$
(a+b)^2=e(a+b)^2=a^2+2e(ab)+b^2,
$$
$$
(a+b)^2=(a+b)^2e=a^2+2(ab)e+b^2.
$$
Hence,
$$
e(ab)=(ab)e=ab.                \eqno{(1.6)}
$$

Let ${e_1,e_2,\dots ,e_n}$ be a basis in $\mathcal{N}$. Then, by (1.5)
we have (1.1).  Also, by (1.6), we have (1.2).
At the same time,
$$
e_iee_j+e_jee_i={\bf 0}, \,\, i,j=1,2,\ldots,n,
$$
i.e. (1.3) is valid. Indeed, since $aea={\bf 0}$, $a\in \mathcal{N}$, we have, for any $i$, $j$, $(e_i+e_j)e(e_i+e_j)={\bf 0}$.
Hence,
\[
e_iee_j+e_jee_i={\bf 0}.
\]
So, (1.3) is valid.
Since $\mathcal{N}$ is an ideal, we have (1.4) is also valid.

Now let us prove that the associative algebra $\mathcal{A}$ with a basis $\{e,e_1,e_2,\dots,e_n\}$,
satisfying the properties (1.1)--(1.4), is an almost inner Rickart algebra. We have
$$
^{\perp_q}\{e\}\cap (\mathcal{S}^2\cup\mathcal{N}^2)=\{{\bf 0}\}={\bf 0}\mathcal{A}{\bf 0}\cap (\mathcal{S}^2\cup\mathcal{N}^2),
$$
since $(\mathcal{S}^2\cup\mathcal{N}^2)\subseteq e\mathcal{A}e$ by (1.2).

We take an arbitrary element of the form
$$
a=\sum_{i=1}^n \lambda_ie_i.
$$
By equalities (1.1) and (1.3), we have
$$
e\in^{\perp_q}\{a\}\cap (\mathcal{S}^2\cup\mathcal{N}^2)=\mathcal{S}^2\cup\mathcal{N}^2=e\mathcal{A}e\cap (\mathcal{S}^2\cup\mathcal{N}^2).
$$

Now we take an element of the form $\lambda e+a$, $\lambda\neq 0$. Since $\mathcal{N}$ is an ideal, by (1.2), we have
$$
(\lambda e+a)b^2(\lambda e+a)=\lambda eb^2\lambda e+\lambda eb^2a+ab^2\lambda e+ab^2a=\lambda eb^2\lambda e=\lambda^2b^2\neq {\bf 0}
$$
for every $b\in \mathcal{N}$ such that $b^2\neq {\bf 0}$. We also have
$$
(\lambda e+a)\mu^2 e(\lambda e+a)=\lambda e\mu^2e\lambda e+c=\lambda^2\mu^2e+c,
$$
where
\[
c=2[\lambda e\mu^2ea+a\mu^2ea+a\mu^2e(\lambda e+a)]
\]
and $c\in \mathcal{N}$ since $\mathcal{N}$ is an ideal. Therefore $(\lambda e+a)(\mu e)^2(\lambda e+a)\neq {\bf 0}$.
Hence,
$$
^{\perp_q}\{\lambda e+a\}\cap (\mathcal{S}^2\cup\mathcal{N}^2)=\{{\bf 0}\}={\bf 0}\mathcal{A}{\bf 0}\cap (\mathcal{S}^2\cup\mathcal{N}^2).
$$
So, $\mathcal{A}$ is an almost inner Rickart algebra. The proof is complete.
\end{proof}

It should be noted that Theorem 2.4 in \cite{AKh}, which is a Jordan analog of Theorem \ref{1.4}, is incorrect.
A correct version of this theorem is proved in \cite{AKh2}. Note that Theorem 2.4 in the paper \cite{AKh} is valid
for a so-called almost inner RJ-algebra, defined following way:

\begin{definition}
A Jordan algebra $\mathcal{J}$ with a semisimple subalgebra $\mathcal{S}$
and a nilpotent radical $\mathcal{N}$ such that $\mathcal{J}=\mathcal{S}\dot{+}\mathcal{N}$ is called
an almost inner RJ-algebra if, for any element $a\in \mathcal{J}$, there exists an idempotent $e\in\mathcal{S}$ such that
\[
{^{\perp_q}\{a\}}\cap (\mathcal{S}^2\cup\mathcal{N}^2)=U_e(\mathcal{J})\cap (\mathcal{S}^2\cup\mathcal{N}^2).
\]
\end{definition}

It should be noted that this and all other statements of the paper \cite{AKh} are valid
for almost inner RJ-algebras. In \cite{AKh2} corrected versions for inner RJ-algebras of the statements
in \cite{AKh} are given.

\begin{remark}
Note that the associative algebra, indicated in theorem \ref{1.4} is a direct sum of an indecomposable non-nilpotent algebra
and a nilpotent algebra.
If, for example, the nilradical $\mathcal{N}$ of an almost inner Rickart algebra $\mathcal{A}$ is of codimension 2,
then the quantity of non-nilpotent indecomposable algebras in the direct sum of $\mathcal{A}$ may be one or two.
For example, the almost inner Rickart algebra
\[
\mathcal{A}_1: e_1^2=e_1, e_2^2=e_2, e_1n_1=n_1, n_2e_2=n_2,
\]
where $\{e_1, e_2, n_1, n_2\}$ is a basis of the algebra $\mathcal{A}_1$ and the omitted products vanish,
is decomposable. The subalgebras
\[
\mathcal{B}_1: e_1^2=e_1, e_1n_1=n_1, \mathcal{B}_2: e_2^2=e_2, e_2n_2=n_2
\]
are almost inner Rickart algebras satisfying the conditions of theorem \ref{1.4} and
$\mathcal{A}_1=\mathcal{B}_1\oplus\mathcal{B}_2$.
The associative algebra
\[
\mathcal{A}_2: e_1^2=e_1, e_2^2=e_2, n_1e_1=n_1, e_2n_1=n_1, n_2e_1=n_2
\]
(\cite[Table 2.3, algebra $As^{19}_4$]{RRB}), where $\{e_1, e_2, n_1, n_2\}$ is a basis of the algebra $\mathcal{A}_2$ and the omitted products vanish,
is indecomposable. The nilradical $\mathcal{N}=<n_1,n_2>$ is an almost inner Rickart algebra by Theorem \ref{1.3}. Then
$\mathcal{A}_2$ is also an almost inner Rickart algebra by Theorem \ref{2}.
The subalgebras
\[
\mathcal{C}_1: e_1^2=e_1, n_1e_1=n_1, n_2e_1=n_2,  \mathcal{C}_2: e_2^2=e_2, e_2n_1=n_1
\]
of the algebra $\mathcal{A}_2$ are almost inner Rickart algebras satisfying the conditions of Theorem \ref{1.4}.
\end{remark}

Let $\mathcal{A}$ be a finite-dimensional almost inner Rickart algebra which is
decomposed (as a linear space) into a direct sum $\mathcal{S}\dot{+}\mathcal{N}$ of
a semisimple algebra $\mathcal{S}$ and a nilpotent radical $\mathcal{N}$ of $\mathcal{A}$.
For $\mathcal{A}$, we consider the following condition:

(B) for any idempotent $e\in\mathcal{A}$ such that
$^{\perp_q}\{x\}\cap (\mathcal{S}^2\cup\mathcal{N}^2)=e\mathcal{A}e\cap (\mathcal{S}^2\cup\mathcal{N}^2)$,
$e\in \mathcal{S}$.

\begin{lemma} \label{2.71}
Let $\mathcal{A}$ be a finite-dimensional associative algebra over the real or complex numbers which is
decomposed (as a linear space) into a direct sum $\mathcal{S}\dot{+}\mathcal{N}$ of
a semisimple algebra $\mathcal{S}$ and a nilpotent radical $\mathcal{N}$ of $\mathcal{A}$.
Then, if $\mathcal{A}$ is an almost inner Rickart algebra satisfying Condition (B), then
$\mathcal{S}$ is also an almost inner Rickart algebra.
\end{lemma}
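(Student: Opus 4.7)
The plan is to verify Condition~(A) for $\mathcal{S}$ directly, by transferring the idempotent supplied by Condition~(A) of $\mathcal{A}$. Since $\mathcal{S}$ is already semisimple, its own decomposition is $\mathcal{S}=\mathcal{S}\dot{+}\{\mathbf{0}\}$, so Condition~(A) applied to $\mathcal{S}$ reduces to the statement that, for every $x\in\mathcal{S}$, there exists an idempotent $e'\in\mathcal{S}$ such that ${^{\perp_q}\{x\}}\cap \mathcal{S}^2=e'\mathcal{S}e'\cap \mathcal{S}^2$.

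Given such an $x\in\mathcal{S}$, I would view it as an element of $\mathcal{A}$ and invoke Condition~(A) of $\mathcal{A}$ to produce an idempotent $e\in\mathcal{A}$ with
\[
{^{\perp_q}\{x\}}\cap (\mathcal{S}^2\cup\mathcal{N}^2)=e\mathcal{A}e\cap (\mathcal{S}^2\cup\mathcal{N}^2).
\]
Condition~(B) then immediately upgrades $e$ to an element of $\mathcal{S}$; this is the natural candidate for $e'$.

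What remains is to intersect both sides with $\mathcal{S}^2$ and to identify the result with an expression computed entirely inside $\mathcal{S}$. On the left, since $\mathcal{S}^2\subseteq \mathcal{S}^2\cup\mathcal{N}^2$ and the quadratic annihilator plainly restricts along the inclusion $\mathcal{S}\hookrightarrow\mathcal{A}$, one obtains ${^{\perp_q}\{x\}}\cap \mathcal{S}^2$ unambiguously, whether computed in $\mathcal{S}$ or in $\mathcal{A}$. On the right, the key identity is $e\mathcal{A}e\cap \mathcal{S}=e\mathcal{S}e$. To prove it, I would write an arbitrary $a\in\mathcal{A}$ as $s+n$ with $s\in\mathcal{S}$ and $n\in\mathcal{N}$; since $e\in\mathcal{S}$ and $\mathcal{N}$ is a two-sided ideal, one has $eae=ese+ene$ with $ese\in\mathcal{S}$ and $ene\in\mathcal{N}$, and the uniqueness of the Wedderburn--Malcev decomposition forces $ene=\mathbf{0}$ whenever $eae\in\mathcal{S}$. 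Intersecting further with $\mathcal{S}^2\subseteq\mathcal{S}$ then gives $e\mathcal{A}e\cap\mathcal{S}^2=e\mathcal{S}e\cap\mathcal{S}^2$, completing the verification.

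I do not anticipate a serious obstacle. Condition~(B) together with the ideal property of $\mathcal{N}$ makes the transfer between $\mathcal{A}$ and $\mathcal{S}$ almost tautological, and the real or complex base field is not actually used in the argument beyond supplying the Wedderburn--Malcev splitting. The only mild nuance is keeping straight the interaction between ${^{\perp_q}}$ and the set of squares $\mathcal{S}^2\cup\mathcal{N}^2$; both behave well under set-theoretic intersection, so no subtlety beyond uniqueness of the direct sum decomposition is required.
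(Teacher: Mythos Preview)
Your proposal is correct and follows essentially the same approach as the paper: take $x\in\mathcal{S}$, use Condition~(A) for $\mathcal{A}$ together with Condition~(B) to obtain an idempotent $e\in\mathcal{S}$, and then intersect both sides with $\mathcal{S}^2$ to conclude ${^{\perp_q}\{x\}}\cap \mathcal{S}^2=e\mathcal{A}e\cap \mathcal{S}^2=e\mathcal{S}e\cap \mathcal{S}^2$. The paper's proof is terser---it simply asserts the last equality---whereas you spell out the justification via the decomposition $eae=ese+ene$ and the ideal property of $\mathcal{N}$, but the underlying argument is the same.
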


\begin{proof}
Let $a$ be an arbitrary nonzero element from $\mathcal{S}$.
Then there exists a nonzero idempotent $e\in \mathcal{S}$ such that
$$
{^{\perp_q}\{a\}}\cap (\mathcal{S}^2\cup\mathcal{N}^2)=e\mathcal{A}e\cap (\mathcal{S}^2\cup\mathcal{N}^2).
$$
Hence,
$$
{^{\perp_q}\{a\}}\cap \mathcal{S}^2=e\mathcal{A}e\cap \mathcal{S}^2=e\mathcal{S}e\cap \mathcal{S}^2.
$$
This ends the proof.
\end{proof}

\begin{lemma}    \label{1.1}
Let $\mathcal{A}=\mathcal{S}\dot{+}\mathcal{N}$ be an almost inner Rickart algebra satisfying Condition (B). Then
the algebra $\mathcal{A}$ has no a nilpotent element $a$ in $\mathcal{S}\cup\mathcal{N}$ such that $a^3\neq {\bf 0}$.
\end{lemma}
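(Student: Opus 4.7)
The strategy is proof by contradiction. Suppose $a\in\mathcal{S}\cup\mathcal{N}$ is a nilpotent element with $a^{3}\neq\mathbf{0}$, and let $k\geq 3$ satisfy $a^{k}\neq\mathbf{0}=a^{k+1}$. Apply (A) to $x=a$ and invoke (B) to obtain an idempotent $e\in\mathcal{S}$ with
\[
{^{\perp_{q}}\{a\}}\cap(\mathcal{S}^{2}\cup\mathcal{N}^{2})=e\mathcal{A}e\cap(\mathcal{S}^{2}\cup\mathcal{N}^{2}).
\]
Two memberships do most of the work: since $e=e^{2}\in e\mathcal{A}e\cap\mathcal{S}^{2}$, the equality forces $e\in{^{\perp_{q}}\{a\}}$, giving $aea=\mathbf{0}$ and hence $a^{p}ea^{q}=\mathbf{0}$ for all $p,q\geq 1$; and with $m=\lfloor k/2\rfloor$, the nonzero element $a^{2m}=(a^{m})^{2}\in\mathcal{S}^{2}\cup\mathcal{N}^{2}$ is annihilated by $a$ on both sides (as $a^{2m+2}=\mathbf{0}$), so $ea^{2m}e=a^{2m}$ and, by left- and right-multiplication by $e$, $ea^{2m}=a^{2m}e=a^{2m}$.

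The core computation
\[
a\cdot a^{2m}\;=\;a(ea^{2m}e)\;=\;(aea^{2m})e\;=\;((aea)a^{2m-1})e\;=\;\mathbf{0}
\]
gives $a^{2m+1}=\mathbf{0}$. For odd $k$, $2m+1=k$, a direct contradiction with $a^{k}\neq\mathbf{0}$, and the odd case is done. For even $k$ ($k\geq 4$) this only recovers the already-known $a^{k+1}=\mathbf{0}$, so I use a richer witness. Since $aea=\mathbf{0}$ gives $(ae)^{2}=(ea)^{2}=(ae)(ea)=\mathbf{0}$ and $(ea)(ae)=ea^{2}e$, the element $(ae+ea)^{2}=ea^{2}e$ lies in $\mathcal{S}^{2}\cup\mathcal{N}^{2}$. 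For each $\lambda\in\mathbb{F}$ the element $c_{\lambda}:=a^{m}+\lambda(ae+ea)\in\mathcal{S}\cup\mathcal{N}$ has $c_{\lambda}^{2}=a^{2m}+\lambda(a^{m+1}e+ea^{m+1})+\lambda^{2}ea^{2}e$ and a direct expansion shows $ac_{\lambda}^{2}a=a^{2m+2}=\mathbf{0}$. Hence $c_{\lambda}^{2}\in e\mathcal{A}e\cap(\mathcal{S}^{2}\cup\mathcal{N}^{2})$, so $ec_{\lambda}^{2}e=c_{\lambda}^{2}$ for every $\lambda$. Matching the coefficient of $\lambda$ yields $2ea^{m+1}e=a^{m+1}e+ea^{m+1}$, and right- (respectively, left-) multiplying this identity by $e$ collapses it to $ea^{m+1}e=a^{m+1}e$ and $ea^{m+1}e=ea^{m+1}$, i.e., the commutation $ea^{m+1}=a^{m+1}e$.

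Finally, the commutation gives $ea^{m+2}=(ea^{m+1})a=(a^{m+1}e)a=a^{m+1}(ea)=a^{m}(aea)=\mathbf{0}$, and induction on $n\geq m+2$ yields $ea^{n}=\mathbf{0}$. Taking $n=2m=k$ (possible because $k\geq 4$ forces $m\geq 2$) produces $ea^{k}=\mathbf{0}$, contradicting $ea^{k}=a^{k}\neq\mathbf{0}$ established at the outset. The main obst
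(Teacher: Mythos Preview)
Your argument is correct, though it takes a genuinely different route from the paper's. Both proofs start the same way: apply (A) and (B) to $x=a$ to get an idempotent $e\in\mathcal{S}$ with $aea=\mathbf{0}$, observe that the relevant even power $a^{2m}=(a^{m})^{2}$ lies in $e\mathcal{A}e$, and deduce $a^{2m+1}=\mathbf{0}$. This already settles the odd case, and the paper's base case $a^{4}=\mathbf{0}\Rightarrow a^{3}=\mathbf{0}$ is precisely your computation with $m=1$.

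The divergence is in how the even case (equivalently, the passage from high nilpotency index down to $4$) is handled. The paper proceeds by \emph{induction on the nilpotency index}: from $(a^{2})^{n+1}=\mathbf{0}$ one reduces to $a^{6}=\mathbf{0}$, then applies the almost inner Rickart condition a \emph{second} time, now with $x=a^{2}$, to force $a^{4}=\mathbf{0}$ and hence $a^{3}=\mathbf{0}$ by the base case. You instead stay with the single idempotent $e$ attached to $a$ and extract more information from it via the auxiliary squares $c_{\lambda}^{2}=(a^{m}+\lambda(ae+ea))^{2}$: since $e\in\mathcal{S}$ and $a\in\mathcal{S}\cup\mathcal{N}$, each $c_{\lambda}$ lies in $\mathcal{S}\cup\mathcal{N}$, so $c_{\lambda}^{2}\in\mathcal{S}^{2}\cup\mathcal{N}^{2}$, and comparing $c_{\lambda}^{2}=ec_{\lambda}^{2}e$ at the linear term in $\lambda$ yields the commutation $ea^{m+1}=a^{m+1}e$, from which $ea^{k}=\mathbf{0}$ follows and contradicts $ea^{k}=a^{k}\neq\mathbf{0}$. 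Your route avoids the paper's somewhat awkward two-stage induction and uses Condition~(B) more substantively (it is essential that $e\in\mathcal{S}$ so that $ae+ea$ stays in $\mathcal{S}\cup\mathcal{N}$); the paper's route is more elementary in that it never needs to manufacture new squares, only to reapply the hypothesis to $a^{2}$. Both are valid; note that your ``matching the coefficient of $\lambda$'' can be replaced by simply taking $\lambda=1$, since the constant and $\lambda^{2}$ terms already agree.

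(Your text is truncated mid-sentence at ``The main obst'', but the mathematical argument is complete before that point.)
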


\begin{proof}
Take an element $a\in \mathcal{S}\cup\mathcal{N}$. Suppose that $a^4={\bf 0}$.
Then $a^2\in{^{\perp_q}\{a\}}\cap (\mathcal{S}^2\cup\mathcal{N}^2)$. Since $\mathcal{A}$ is an almost inner Rickart algebra,
there exists a nonzero idempotent $e\in \mathcal{A}$ such that $^{\perp_q}\{a\}\cap (\mathcal{S}^2\cup\mathcal{N}^2)
=e\mathcal{A}e\cap (\mathcal{S}^2\cup\mathcal{N}^2)$. Therefore $aea={\bf 0}$ and $ea^2=a^2$ since $e\in e\mathcal{A}e\cap (\mathcal{S}^2\cup\mathcal{N}^2)$.
Hence, $a^3=aa^2=aea^2=(aea)a={\bf 0}$.

Further, we shall apply the induction.
The induction assumption is $(a^2)^k={\bf 0}$,  $1\leq k\leq n$, implies that $a^3={\bf 0}$, where the induction step
is $n$. If $n=1$, $2$ then the induction assumption is valid by the previous paragraph.

Suppose that $(a^2)^{n+1}={\bf 0}$. Then, if $n=2k$, then $(a^2)^{n+1}a^2=(a^2)^{2k+2}=(a^4)^{k+1}={\bf 0}$ and $1\leq k+1<n$.
By the inductive assumption $a^6={\bf 0}$. So, $(a^2)a^2(a^2)={\bf 0}$ and $a^2\in{^{\perp_q}\{a^2\}}\cap (\mathcal{S}^2\cup\mathcal{N}^2)$. Since $\mathcal{A}$ is an almost inner Rickart algebra,
there exists an idempotent $f\in \mathcal{A}$ such that $^{\perp_q}\{a^2\}\cap (\mathcal{S}^2\cup\mathcal{N}^2)
=f\mathcal{A}f\cap (\mathcal{S}^2\cup\mathcal{N}^2)$. Therefore $fff=f\in f\mathcal{A}f\cap (\mathcal{S}^2\cup\mathcal{N}^2)$, $a^2fa^2={\bf 0}$ and $fa^2=a^2$.
Hence, $a^4=a^2fa^2={\bf 0}$. By the previous paragraph $a^3={\bf 0}$.

If $n=2k+1$, then $(a^2)^{n+1}=(a^2)^{2k+2}=(a^4)^{k+1}={\bf 0}$ and $1\leq k+1<n$.
Again, by the inductive assumption $a^6={\bf 0}$. Hence, by the previous paragraph $a^3={\bf 0}$.
Thus, by the induction, we obtain that
for  $a\in \mathcal{S}\cup\mathcal{N}$, if $a^n={\bf 0}$ for some natural number $n\geq 4$, then $a^3={\bf 0}$.
The proof is complete.
\end{proof}

\begin{theorem} \label{2.7}
Let $\mathcal{A}$ be a finite-dimensional almost inner Rickart algebra over the real or complex numbers satisfying Condition (B),
with a nilpotent radical $\mathcal{N}il(\mathcal{A})$.
Then there exist pairwise orthogonal idempotents $e_1,e_2,\dots,e_m$ in $\mathcal{A}$ such that
\[
\mathcal{A}=(e_1\mathbb{F}\oplus e_2\mathbb{F}\oplus\dots\oplus e_m\mathbb{F})\dot{+}\mathcal{N}il(\mathcal{A}),
\]
where $\mathbb{F}=\mathbb{R}$, $\mathbb{C}$, $\mathbb{H}$.
\end{theorem}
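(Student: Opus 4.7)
The plan is to reduce to a decomposition statement for the semisimple part $\mathcal{S}$ alone, and then to rule out any Wedderburn component of matrix size at least $2$ by testing the almost inner Rickart condition on an off-diagonal matrix unit. By Lemma~\ref{2.71}, Condition (B) combined with the AIR hypothesis forces $\mathcal{S}$, viewed as its own algebra with trivial nilradical, to be an almost inner Rickart algebra; so it suffices to show that such an $\mathcal{S}$ decomposes as a direct sum of subalgebras each isomorphic to $\mathbb{R}$, $\mathbb{C}$, or $\mathbb{H}$, since the required idempotents of $\mathcal{A}$ will be the units of those summands. By Wedderburn's structure theorem and Frobenius' theorem, write $\mathcal{S}=S_1\oplus\cdots\oplus S_m$ with $S_i\cong M_{n_i}(D_i)$ and $D_i\in\{\mathbb{R},\mathbb{C},\mathbb{H}\}$ (only $D_i=\mathbb{C}$ occurs when the base field is $\mathbb{C}$); denote by $u_i$ the identity of $S_i$, so that the $u_i$ are pairwise orthogonal idempotents in $\mathcal{S}$. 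The task reduces to showing $n_i=1$ for every $i$, after which we set $e_i=u_i$.

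Suppose, for contradiction, that $n_{i_0}\geq 2$, and take $a=E_{12}\in S_{i_0}$ using the matrix units $\{E_{jk}\}$ of $S_{i_0}$; then $a^2=\mathbf{0}$. A direct computation yields
\[
{^{\perp_q}\{a\}}\cap\mathcal{S}^2=\{\,y\in\mathcal{S}^2:(y_{i_0})_{21}=0\,\},
\]
where $y_{i_0}$ is the $S_{i_0}$-component of $y$; in particular $u_{i_0}$ belongs to this set because $u_{i_0}=u_{i_0}^2$ has $(2,1)$-entry zero. Apply the AIR condition for $\mathcal{S}$ to $a$ to produce an idempotent $\varepsilon\in\mathcal{S}$ with
\[
{^{\perp_q}\{a\}}\cap\mathcal{S}^2=\varepsilon\mathcal{S}\varepsilon\cap\mathcal{S}^2.
\]
The membership $u_{i_0}\in\varepsilon\mathcal{S}\varepsilon$ forces $\varepsilon u_{i_0}=u_{i_0}\varepsilon=u_{i_0}$, i.e.\ $u_{i_0}\leq\varepsilon$; consequently $S_{i_0}=u_{i_0}\mathcal{S}u_{i_0}\subseteq\varepsilon\mathcal{S}\varepsilon$, and hence $S_{i_0}^2\subseteq\varepsilon\mathcal{S}\varepsilon\cap\mathcal{S}^2\subseteq{^{\perp_q}\{a\}}$.

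To derive the contradiction, exhibit $p=E_{11}+E_{21}\in S_{i_0}$, which satisfies $p^2=p$ and therefore lies in $S_{i_0}^2$. Yet
\[
apa=E_{12}(E_{11}+E_{21})E_{12}=E_{12}\neq\mathbf{0},
\]
contradicting $p\in{^{\perp_q}\{a\}}$. Hence every $n_i=1$ and the theorem follows by taking $e_i=u_i$. The main technical point I anticipate is the passage $u_{i_0}\leq\varepsilon\Rightarrow S_{i_0}\subseteq\varepsilon\mathcal{S}\varepsilon$, which rests on $u_{i_0}$ being the central identity of its Wedderburn block; the rest of the argument consists either of an appeal to Lemma~\ref{2.71} or one-line matrix-unit computations, and since only integer-coefficient combinations of matrix units are used, the same reasoning works uniformly for $D_i\in\{\mathbb{R},\mathbb{C},\mathbb{H}\}$.
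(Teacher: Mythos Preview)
Your argument is correct and takes a genuinely different route from the paper's proof. The paper proceeds by (i) invoking Lemma~\ref{1.1} to get $\mathcal{N}il(\mathcal{A})^3=\{\mathbf{0}\}$ and citing an external remark in \cite{AKh} to restrict each simple summand to size $n_i\le 3$, (ii) showing via Lemma~\ref{2.71} that $\mathcal{S}$ is AIR and then that each simple block $\mathcal{C}_i$ is AIR, and (iii) citing examples in \cite{AA} and \cite{AKh} to rule out $M_2(\mathbb{F})$ and $M_3(\mathbb{F})$ one at a time.

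By contrast, you work entirely inside $\mathcal{S}$ after Lemma~\ref{2.71} and eliminate \emph{all} blocks with $n_i\ge 2$ in one stroke via the explicit pair $a=E_{12}$, $p=E_{11}+E_{21}$: from $u_{i_0}\in\varepsilon\mathcal{S}\varepsilon$ you correctly deduce $\varepsilon u_{i_0}=u_{i_0}\varepsilon=u_{i_0}$ (hence $S_{i_0}\subseteq\varepsilon\mathcal{S}\varepsilon$), and the idempotency of $p$ together with $apa=E_{12}\neq\mathbf{0}$ gives the contradiction; the computations use only the relations $E_{ij}E_{kl}=\delta_{jk}E_{il}$ and so are insensitive to whether $D_i$ is $\mathbb{R}$, $\mathbb{C}$, or $\mathbb{H}$. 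Your approach is more self-contained (no appeal to \cite{AA} or \cite{AKh}, and no need for the intermediate bound $n_i\le 3$ or for Lemma~\ref{1.1}) and avoids the extra step of descending from $\mathcal{S}$ to each $\mathcal{C}_i$; the paper's approach, on the other hand, records the useful side fact that each simple block is itself AIR.
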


\begin{proof}
By the Wedderburn-Malcev and Cartan-Frobenius-Molien theorems, every finite-dimensional associative algebra $\mathcal{A}$ over the real or complex numbers
can be represented as $\mathcal{A}=\mathcal{S}\dot{+}\mathcal{N}$, where $\mathcal{N}$ is a nil-radical and $\mathcal{S}$ is
a semi-simple algebra. In its turn $\mathcal{S}$ can be represented as
\[
\mathcal{S}=\mathcal{C}_1\oplus\mathcal{C}_2\oplus\dots\mathcal{C}_n,
\]
where $\mathcal{C}_i$, $i=1,2,\dots n$, are simple algebras and $\mathcal{C}_i\cong M_{n_i}(\mathcal{N}_i)$,
where $M_{n_i}(\mathcal{N}_i)$ is the algebra of $n_i\times n_i$ matrices with entries from a division algebra $\mathcal{N}_i$.
Also, by Lemma \ref{1.1} $\mathcal{N}il(\mathcal{A})^3=\{{\bf 0}\}$.
Therefore, by the examples in item 2) of Remark 1.6 in \cite{AKh},
every finite-dimensional almost inner Rickart algebra $\mathcal{A}$ has the following form
\[
\mathcal{A}=\mathcal{S}\dot{+}\mathcal{N}il(\mathcal{A}),
\]
where $\mathcal{N}il(\mathcal{A})$ is the nilpotent radical of $\mathcal{A}$ and
$\mathcal{S}$ is a semisimple subalgebra every simple subalgebra of which is
isomorphic to only one of the following algebras: an abelian algebra, the algebra of
$2\times 2$-matrices over a division algebra, the algebra of
$3\times 3$-matrices over a division algebra.

By Lemma \ref{2.71} $\mathcal{S}$ is also an almost inner Rickart algebra.
Moreover, every direct summand $\mathcal{C}_i$ is also an almost inner Rickart algebra.
Indeed, let $e_i$ be a unit element and $x$ is an element of $\mathcal{C}_i$. Then, there exists an idempotent
$e$ in $\mathcal{S}$ such that $^{\perp_q}\{x\}\cap \mathcal{S}^2=e\mathcal{S}e\cap \mathcal{S}^2$.
Hence,
$(^{\perp_q}\{x\}\cap \mathcal{S}^2)\cap\mathcal{C}_i=e_i\mathcal{S}e_i\cap {\mathcal{C}_i}^2=e_i\mathcal{C}_ie_i\cap \mathcal{C}_i^2$.

By the examples on page 35 of \cite{AA} and on page 8 of \cite{AKh} the
algebras $M_2(\mathbb{F})$, $M_3(\mathbb{F})$, where $\mathbb{F}=\mathbb{R}, \mathbb{C}, \mathbb{H}$,
are not inner RJ-algebras. But for a simple associative algebra the definition of
an almost inner Rickart algebra coincides with the definition of an inner RJ-algebra.
So, the algebras $M_2(\mathbb{F})$, $M_3(\mathbb{F})$, where $\mathbb{F}=\mathbb{R}, \mathbb{C}, \mathbb{H}$,
are not almost inner Rickart algebras. Therefore, there exist pairwise orthogonal idempotents
$e_1,e_2,\dots,e_m$ in $\mathcal{A}$ such that
\[
\mathcal{S}=e_1\mathbb{F}\oplus e_2\mathbb{F}\oplus\dots\oplus e_m\mathbb{F}.
\]
The proof is complete.
\end{proof}

\begin{theorem}    \label{2}
Let $\mathcal{A}$ be an indecomposible finite-dimensional almost inner Rickart algebra
over the real or complex numbers $\mathbb{F}$ satisfying Condition (B)
with a two-dimensional semisimple subalgebra $\mathcal{S}$ and
an $n$-dimensional commutative nilpotent radical $\mathcal{N}$ such that $\mathcal{A}=\mathcal{S}\dot{+}\mathcal{N}$, $\mathcal{N}^2\neq\{{\bf 0}\}$.
Then there exist two mutually orthogonal nonzero idempotents $p_1$, $p_2\in \mathcal{A}$ such that
$\mathcal{A}=(\mathbb{F}p_1\oplus\mathbb{F}p_2)\dot{+}\mathcal{N}$ and
for any basis $\{e_1,e_2,\dots,e_n\}$ of $\mathcal{N}$ one of the following two cases is valid

{\bf the first case}
$$
e_ie_je_k={\bf 0}, \\ \\ \ i,j,k=1,2,\ldots,n,                     \eqno{(1.1)}
$$
$$
p_1(e_ie_j)=(e_ie_j)p_1=e_ie_j, \ \\ i,j=1,2,\ldots,n,       \eqno{(1.2)}
$$
$$
e_ip_1e_j+e_jp_1e_i={\bf 0},\\ \\ i,j=1,2,\ldots,n,                 \eqno{(1.3)}
$$
$$
p_ke_i\in \mathcal{N},e_ip_k\in\mathcal{N}, k=1,2,            \eqno{(1.4)}
$$
$$
p_2(e_ie_j+e_je_i)={\bf 0}, i,j=1,2,\dots,n,                        \eqno{(1.5)}
$$

{\bf the second case}
$$
e_ie_je_k={\bf 0}, \\ \\ \ i,j,k=1,2,\ldots,n,             \eqno{(2.1)}
$$
$$
(p_1+p_2)(e_ie_j)=(e_ie_j)(p_1+p_2)=e_ie_j, \ \\ \ i,j=1,2,\ldots,n,       \eqno{(2.2)}
$$
$$
e_i(p_1+p_2)e_j+e_j(p_1+p_2)e_i={\bf 0},\\ \\ \ i,j=1,2,\ldots,n,                 \eqno{(2.3)}
$$
$$
(p_1+p_2)e_i\in \mathcal{N}, e_i(p_1+p_2)\in\mathcal{N},          \eqno{(2.4)}
$$
$$
\forall x\in \mathcal{N},
\text{ if } p_1x^2p_1=0 \text{ or } p_2x^2p_2=0 \text{ then } p_1x^2p_2+p_2x^2p_1=0,          \eqno{(2.5)}
$$
$$
e_ip_ke_j+e_jp_ke_i={\bf 0}, \,\,\, k=1,2, \,\,\, i,j=1,2,\ldots,n.          \eqno{(2.6)}
$$
Conversely, any associative algebra
$\mathcal{A}$ over a field $\mathbb{F}$ of characteristic $\neq 2$ and $\neq 3$
with a two-dimensional semisimple subalgebra $\mathcal{S}=\mathbb{F}p_1\oplus\mathbb{F}p_2$, where $p_1$, $p_2$ are
mutually orthogonal idempotents, and
an $n$-dimensional commutative nilpotent radical $\mathcal{N}$ with a basis $\{e_1,e_2,\dots,e_n\}$ such that
$\mathcal{A}=\mathcal{S}\dot{+}\mathcal{N}$, is an almost inner Rickart algebra if
conditions (1.1)--(1.5) or conditions (2.1)--(2.5) are valid.
\end{theorem}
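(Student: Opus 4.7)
The argument follows the Peirce-decomposition strategy of Theorem \ref{1.4} but must accommodate three nonzero idempotents in $\mathcal{S}$ (namely $p_1$, $p_2$, $p_1+p_2$) instead of one, and must pin down the extra compatibility (2.5) that distinguishes the second case.

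First, invoke Theorem \ref{2.7}: since $\mathcal{A}$ is indecomposable with $\dim_{\mathbb{F}}\mathcal{S}=2$, we obtain $\mathcal{S}=\mathbb{F}p_1\oplus\mathbb{F}p_2$ for orthogonal idempotents $p_1,p_2$. Lemma \ref{1.1} gives $a^3={\bf 0}$ for every $a\in\mathcal{N}$, and the cubing/polarization identities used in the proof of Lemma \ref{1.41} upgrade this to $abc={\bf 0}$ for all $a,b,c\in\mathcal{N}$, i.e.\ (1.1)/(2.1). In particular $\mathcal{N}^2\subseteq {^{\perp_q}\{a\}}$ for every $a\in\mathcal{N}$. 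Applying Condition (B) to each nonzero $a\in\mathcal{N}$ produces an idempotent $e_a\in\mathcal{S}$ with ${^{\perp_q}\{a\}}\cap(\mathcal{S}^2\cup\mathcal{N}^2)=e_a\mathcal{A}e_a\cap(\mathcal{S}^2\cup\mathcal{N}^2)$; the inclusion $\mathcal{N}^2\subseteq e_a\mathcal{A}e_a$ forces $e_a$ to act as a two-sided identity on $\mathcal{N}^2$, while $e_a\in\mathcal{S}^2\cap e_a\mathcal{A}e_a$ gives $ae_a a={\bf 0}$. Since the only nonzero idempotents of $\mathcal{S}$ are $p_1$, $p_2$, $p_1+p_2$, this yields the required dichotomy: either some $p_k$ (relabel to $p_1$) acts as identity on $\mathcal{N}^2$ and is admissible for every $a$ (Case 1), or this fails for some $a$ and one must take $e_a=p_1+p_2$ (Case 2).

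In Case 1, $p_1\mathcal{N}^2=\mathcal{N}^2p_1=\mathcal{N}^2$ is (1.2), and orthogonality $p_1p_2={\bf 0}$ then forces $p_2\mathcal{N}^2=\mathcal{N}^2p_2=\{{\bf 0}\}$, yielding (1.5) via commutativity of $\mathcal{N}$. Polarizing $ap_1a={\bf 0}$ through $(e_i+e_j)p_1(e_i+e_j)={\bf 0}$ gives (1.3), and (1.4) merely records that $\mathcal{N}$ is a two-sided ideal. In Case 2, the same manipulations with $p_1+p_2$ in place of $p_1$ give (2.2)--(2.4). The relations (2.6) are obtained by applying Condition (A) at the elements $x=p_k\in\mathcal{S}$: the identity ${^{\perp_q}\{p_k\}}\cap\mathcal{S}^2=\mathbb{F}p_{3-k}$ pins the accompanying idempotent to $p_{3-k}$, and matching with $\mathcal{N}^2$ forces $ap_ka={\bf 0}$ for every $a\in\mathcal{N}$, hence (2.6) by polarization. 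Relation (2.5) is then precisely the compatibility required so that the set ${^{\perp_q}\{p_k\}}\cap\mathcal{N}^2$ coincides with $p_{3-k}\mathcal{A}p_{3-k}\cap\mathcal{N}^2$: in the Peirce decomposition supplied by (2.2), vanishing of a diagonal block $p_kx^2p_k$ of a square $x^2\in\mathcal{N}^2$ must be accompanied by vanishing of the symmetric cross block $p_1x^2p_2+p_2x^2p_1$.

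For the converse, assume the stated relations and verify Condition (A) by cases on $x\in\mathcal{A}$: $x={\bf 0}$, $x\in\mathcal{N}\setminus\{{\bf 0}\}$, $x=\alpha p_1+\beta p_2+n$ with $(\alpha,\beta)\neq(0,0)$ and $n\in\mathcal{N}$, and $x\in\mathcal{S}$. In each case the relations (1.1)--(1.4) (respectively (2.1)--(2.4), (2.6)) reduce $xyx$ for $y\in\mathcal{S}^2\cup\mathcal{N}^2$ to a single Peirce component from which the correct idempotent is read off; in Case 2 the compatibility (2.5) is exactly what makes this reading consistent when $x$ is purely semisimple. The principal obstacle is therefore Case 2, specifically the derivation and use of (2.5): it has no counterpart in Theorem \ref{1.4} and is the unique condition that reconciles the idempotent mandated by elements of $\mathcal{N}$ ($p_1+p_2$) with those mandated by elements of $\mathcal{S}$ ($p_{3-k}$).
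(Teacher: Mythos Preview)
Your outline follows the paper's strategy closely, and your derivations of (1.1)--(1.5), (2.1)--(2.4) and (2.5) are essentially those of the paper. There is, however, a genuine gap in your derivation of (2.6), and a related imprecision in how you set up the Case~1/Case~2 dichotomy.

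For (2.6) you propose to apply Condition~(A) at $x=p_k$. That does pin the accompanying idempotent to $p_{3-k}$ and yields
\[
{^{\perp_q}\{p_k\}}\cap(\mathcal{S}^2\cup\mathcal{N}^2)=p_{3-k}\mathcal{A}p_{3-k}\cap(\mathcal{S}^2\cup\mathcal{N}^2),
\]
but this classifies the elements $y$ with $p_k\,y\,p_k={\bf 0}$; it says nothing about the \emph{transposed} relation $a\,p_k\,a={\bf 0}$ for $a\in\mathcal{N}$, which is what (2.6) asserts. (What you extract from ${^{\perp_q}\{p_k\}}$ is exactly the input for (2.5), not (2.6).) The paper obtains (2.6) differently: in Case~2 one has, for \emph{every} nonzero $b\in\mathcal{N}$,
\[
{^{\perp_q}\{b\}}\cap(\mathcal{S}^2\cup\mathcal{N}^2)=(p_1+p_2)\mathcal{A}(p_1+p_2)\cap(\mathcal{S}^2\cup\mathcal{N}^2),
\]
and since $p_k\in\mathcal{S}^2\subseteq(p_1+p_2)\mathcal{A}(p_1+p_2)$ this forces $p_k\in{^{\perp_q}\{b\}}$, i.e.\ $b\,p_k\,b={\bf 0}$, whence (2.6) by polarization.

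This correction also dictates how the dichotomy must be phrased. The paper's split is: Case~1 if $e_a\in\{p_1,p_2\}$ for \emph{some} $a\in\mathcal{N}$ (then, as you note, $\mathcal{N}^2\subseteq p_1\mathcal{A}p_1$ forces $f_b\neq p_2$ for every $b$, so $p_1\in f_b\mathcal{A}f_b$ and $bp_1b={\bf 0}$); Case~2 only when $e_a=p_1+p_2$ for \emph{all} $a\in\mathcal{N}$. Your formulation (``fails for some $a$ and one must take $e_a=p_1+p_2$'') allows a single $a$ with $e_a=p_1+p_2$ while others have $e_b=p_1$, which is not enough to run the argument for (2.6). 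Once you adopt the ``for all $a$'' version of Case~2, (2.6) follows immediately and the remainder of your sketch (including the converse, where (2.5) is used exactly as you describe) matches the paper.
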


\begin{proof}
By Theorem \ref{2.7}, there exist pairwise orthogonal idempotents $p_1,p_2$ in $\mathcal{A}$ such that
\[
\mathcal{A}=(p_1\mathbb{F}\oplus p_2\mathbb{F})\dot{+}\mathcal{N}.
\]

Since $\mathcal{A}$ satisfies Condition (B), we have, by Lemma \ref{1.1}, $\mathcal{N}^3=\{{\bf 0}\}$.
Therefore, similar to the proof of Lemma \ref{1.41}, we have, for any elements $a$, $b$, $c$ from $\mathcal{N}$,
\[
a(bc)=(ab)c={\bf 0}.    \eqno{(1.6)}
\]
From this it follows that (1.1).

Since $\mathcal{N}^2\neq\{{\bf 0}\}$, there exists $x\in \mathcal{N}$ such that $x^2\neq {\bf 0}$.
Let $a$ be an arbitrary nonzero element from the radical $\mathcal{N}$.
Then, by (1.6), there exists a nonzero idempotent $e\in \mathcal{A}$ such that
$$
x^2\in^{\perp_q}\{a\}\cap (\mathcal{S}^2\cup\mathcal{N}^2)=e\mathcal{A}e\cap (\mathcal{S}^2\cup\mathcal{N}^2).
$$
We also have $e\in e\mathcal{A}e$ and, by Condition (B), $e\in\mathcal{S}$.
Hence, $e\in e\mathcal{A}e\cap (\mathcal{S}^2\cup\mathcal{N}^2)$, i.e.,
$e\in ^{\perp_q}\{a\}\cap (\mathcal{S}^2\cup\mathcal{N}^2)$. From this it follows that $aea={\bf 0}$.

Due to (1.6) we have $\mathcal{N}^2\subseteq ^{\perp_q}\{a\}\cap (\mathcal{S}^2\cup\mathcal{N}^2)=e\mathcal{A}e\cap (\mathcal{S}^2\cup\mathcal{N}^2)$. Hence,
$$
\mathcal{N}^2\subseteq e\mathcal{A}e\cap (\mathcal{S}^2\cup\mathcal{N}^2).      \eqno{(1.7)}
$$

There are following three possible cases
\[
\left\{
  \begin{array}{ll}
    e=p_1, & \hbox{} \\
    e=p_2, & \hbox{} \\
    e=p_1+p_2. & \hbox{}
  \end{array}
\right.
\]

Now, suppose that $e=p_1$, i.e.,
\[
{^{\perp_q}\{a\}}\cap (\mathcal{S}^2\cup\mathcal{N}^2)=p_1\mathcal{A}p_1\cap (\mathcal{S}^2\cup\mathcal{N}^2).
\]

We prove condition (1.2). If we take arbitrary elements $x$, $y\in \mathcal{N}$, then, by (1.7),
$$
x^2,\,\, y^2,\,\, (x+y)^2\in^{\perp_q}\{a\}\cap (\mathcal{S}^2\cup\mathcal{N}^2)=p_1\mathcal{A}p_1\cap (\mathcal{S}^2\cup\mathcal{N}^2),
$$
$$
(x+y)^2=p_1(x+y)^2=x^2+2p_1(xy)+y^2,
$$
$$
(x+y)^2=(x+y)^2p_1=x^2+2(xy)p_1+y^2.
$$
Hence, $p_1(xy)=(xy)p_1=xy$, and, from this it follows that (1.2) is valid.

We prove (1.3). Let $b$ be another nonzero element of the radical $\mathcal{N}$.
Then there exists a nonzero idempotent $f\in \mathbb{F}p_1\oplus \mathbb{F}p_2$ such that
$$
a^2\in^{\perp_q}\{b\}\cap (\mathcal{S}^2\cup\mathcal{N}^2)=f\mathcal{A}f\cap (\mathcal{S}^2\cup\mathcal{N}^2), \,\, \, bfb={\bf 0}.
$$
Due to (1.6) we have $\mathcal{N}^2\subseteq ^{\perp_q}\{b\}\cap (\mathcal{S}^2\cup\mathcal{N}^2)=f\mathcal{A}f\cap (\mathcal{S}^2\cup\mathcal{N}^2)$.
We also have $\mathcal{N}^2\subseteq p_1\mathcal{A}p_1\cap (\mathcal{S}^2\cup\mathcal{N}^2)$. Hence,
$f\neq p_2$ and, $f=p_1+p_2$ or $f=p_1$. Therefore, $p_1\in f\mathcal{A}f\cap (\mathcal{S}^2\cup\mathcal{N}^2)$,
i.e., $p_1\in^{\perp_q}\{b\}$. Hence, $bp_1b={\bf 0}$. Since $b$ is arbitrarily chosen,
for any $i$, $j$, $(e_i+e_j)e(e_i+e_j)={\bf 0}$.
Hence, $e_iee_j+e_jee_i={\bf 0}$. So, (1.3) is valid.

Since $\mathcal{N}$ is an ideal, we have (1.4) is also valid.

We consider the annihilator ${^{\perp_q}\{p_2\}}$. It is clear that
$p_1\in{^{\perp_q}\{p_2\}}\cap (\mathcal{S}^2\cup\mathcal{N}^2)$ since $p_1\cdot p_2={\bf 0}$ and $p_2p_1p_2={\bf 0}$.
Also, from
\[
p_2(p_1\mathcal{A}p_1)p_2=(p_2p_1)\mathcal{A}(p_2p_1)={\bf 0}
\]
it follows that
\[
\mathcal{N}^2\subset p_1\mathcal{A}p_1\cap (\mathcal{S}^2\cup\mathcal{N}^2)\subset {^{\perp_q}\{p_2\}}\cap (\mathcal{S}^2\cup\mathcal{N}^2).
\]
Hence, $p_2a^2p_2={\bf 0}$ for all $a\in \mathcal{N}$.
From $p_2((a+b)^2-a^2-b^2)p_2={\bf 0}$ for all $a$, $b\in \mathcal{N}$ it follows that $p_2(ab+ba)p_2={\bf 0}$.
Note that, since $\mathcal{A}$ is an almost inner Rickart algebra satisfying condition (B) and
$p_1\in {^{\perp_q}\{p_2\}}\cap (\mathcal{S}^2\cup\mathcal{N}^2)=e\mathcal{A}e\cap (\mathcal{S}^2\cup\mathcal{N}^2)$
for some idempotent $e\neq {\bf 0}$ in $\mathcal{A}$ we have $e\in\mathcal{S}^2$ and $p_2ep_2={\bf 0}$. Hence, $p_2e={\bf 0}$. So, $e=p_1$ and
\[
{^{\perp_q}\{p_2\}}\cap (\mathcal{S}^2\cup\mathcal{N}^2)=p_1\mathcal{A}p_1\cap (\mathcal{S}^2\cup\mathcal{N}^2).
\]

Now, we prove (1.5). We take the annihilator ${^{\perp_q}\{p_1\}}$.
It is clear that
$p_2\in{^{\perp_q}\{p_1\}}\cap (\mathcal{S}^2\cup\mathcal{N}^2)$ since $p_1\cdot p_2={\bf 0}$ and $p_1p_2p_1={\bf 0}$.
Also, from
\[
\mathcal{N}^2\subset p_1\mathcal{A}p_1
\]
it follows that $p_2(\mathcal{N}^2)={\bf 0}$, $p_2a^2={\bf 0}$ for any $a\in \mathcal{N}$.
From this it follows that $p_2(ab+ba)={\bf 0}$ for any $a$, $b\in \mathcal{N}$.
In particular,
\[
p_2(e_ie_j+e_je_i)={\bf 0}, i,j=1,2,\dots,n.  \eqno{(1.5)}
\]
This ends the case $e=p_1$.

Now let us prove that
an associative algebra
$\mathcal{A}$ over a field $\mathbb{F}$ of characteristic $\neq 2$ and $\neq 3$
with a two-dimensional semisimple subalgebra $\mathcal{S}=\mathbb{F}p_1\oplus\mathbb{F}p_2$, where $p_1$, $p_2$ are
mutually orthogonal idempotents, and
an $n$-dimensional commutative nilpotent radical $\mathcal{N}$ with a basis $\{e_1,e_2,\dots,e_n\}$ such that
$\mathcal{A}=\mathcal{S}\dot{+}\mathcal{N}$, is an almost inner Rickart algebra if
conditions (1.1)--(1.5) are valid.

We take the annihilator $^{\perp_q}\{p_1\}$. It is clear that $p_2\in ^{\perp_q}\{p_1\}$.
We prove that
$$
^{\perp_q}\{p_1\}\cap (\mathcal{S}^2\cup\mathcal{N}^2)=p_2\mathcal{A}p_2\cap (\mathcal{S}^2\cup\mathcal{N}^2).
$$

Indeed, we have $p_2\mathcal{A}p_2\subseteq ^{\perp_q}\{p_1\}$. The equality
\[
p_1a^2p_1=a^2,
\]
where $a\in \mathcal{N}$, is valid by (1.2).
Hence, $^{\perp_q}\{p_1\}\cap (\mathcal{S}^2\cup\mathcal{N}^2)=\{\mu^2p_2:\mu\in \mathbb{F}\}$. At the same time, by (1.5),
$p_2a^2={\bf 0}$ for any $a\in \mathcal{N}$.
Hence, $p_2\mathcal{A}p_2\cap (\mathcal{S}^2\cup\mathcal{N}^2)=\{\mu^2p_2:\mu\in \mathbb{F}\}$.
So, $^{\perp_q}\{p_1\}\cap (\mathcal{S}^2\cup\mathcal{N}^2)=p_2\mathcal{A}p_2\cap (\mathcal{S}^2\cup\mathcal{N}^2)$.

Now we take the annihilator $^{\perp_q}\{p_2\}$. It is clear that $p_1\in ^{\perp_q}\{p_2\}$.
We prove that
$$
^{\perp_q}\{p_2\}\cap (\mathcal{S}^2\cup\mathcal{N}^2)=p_1\mathcal{A}p_1\cap (\mathcal{S}^2\cup\mathcal{N}^2).
$$
Indeed, we have $p_1\mathcal{A}p_1\cap (\mathcal{S}^2\cup\mathcal{N}^2)\subseteq ^{\perp_q}\{p_2\}\cap (\mathcal{S}^2\cup\mathcal{N}^2)$.
Clearly $^{\perp_q}\{p_2\}\cap \mathcal{S}^2\subseteq p_1\mathcal{A}p_1\cap \mathcal{S}^2$.
It is sufficient to prove $^{\perp_q}\{p_2\}\cap \mathcal{N}^2\subseteq p_1\mathcal{A}p_1\cap \mathcal{N}^2$.
Take an arbitrary element $x^2\in^{\perp_q}\{p_2\}\cap \mathcal{N}^2$. Then $x^2\in p_1\mathcal{A}p_1$ by (1.2).
Hence, $x^2\in p_1\mathcal{A}p_1\cap \mathcal{N}^2$. So, $^{\perp_q}\{p_2\}\cap \mathcal{N}^2\subseteq p_1\mathcal{A}p_1\cap \mathcal{N}^2$.
Therefore, $^{\perp_q}\{p_2\}\cap (\mathcal{S}^2\cup\mathcal{N}^2)=p_1\mathcal{A}p_1\cap (\mathcal{S}^2\cup\mathcal{N}^2)$.

We take an element of the form $\lambda p_1+a$, $\lambda\neq 0$, and consider
$$
^{\perp_q}\{\lambda p_1+a\}\cap (\mathcal{S}^2\cup\mathcal{N}^2).
$$
Then, for any $b\in\mathcal{N}$ such that $b^2\neq 0$,
\[
(\lambda p_1+a)b^2(\lambda p_1+a)=\lambda^2 p_1b^2p_1=\lambda^2b^2\neq {\bf 0}
\]
by (1.2). Hence, $^{\perp_q}\{\lambda p_1+a\}\cap \mathcal{N}^2=\{{\bf 0}\}$.  Also
\[
(\lambda p_1+a)p_2(\lambda p_1+a)=\lambda^2 p_1p_2p_1+\lambda p_1p_2a+\lambda ap_2p_1+ap_2a=ap_2a.
\]
If $ap_2a={\bf 0}$, then $p_2\in^{\perp_q}\{\lambda p_1+a\}\cap (\mathcal{S}^2\cup\mathcal{N}^2)$.
Hence,
$$
^{\perp_q}\{\lambda p_1+a\}\cap (\mathcal{S}^2\cup\mathcal{N}^2)
=p_2\mathcal{A}p_2\cap (\mathcal{S}^2\cup\mathcal{N}^2)=\{\mu^2p_2: \mu\in\mathbb{F}\}.
$$
Else, if $ap_2a\neq {\bf 0}$, then $p_2\notin ^{\perp_q}\{\lambda p_1+a\}\cap (\mathcal{S}^2\cup\mathcal{N}^2)$.
Therefore,
\[
^{\perp_q}\{\lambda p_1+a\}\cap (\mathcal{S}^2\cup\mathcal{N}^2)=\{{\bf 0}\}=0A0\cap (\mathcal{S}^2\cup\mathcal{N}^2).
\]

Now we take an element of the form $\lambda p_1+\mu p_2+a$, $\lambda\neq 0$, $\mu\neq 0$ and
consider the annihilator $^{\perp_q}\{\lambda p_1+\mu p_2+a\}$. Since $\mathcal{N}$ is an ideal,
by the properties of the Peirce decomposition and (1.2), (1.5), we have
$$
(\lambda p_1+\mu p_2+a)b^2(\lambda p_1+\mu p_2 +a)
$$
$$
=(\lambda p_1+\mu p_2)b^2(\lambda p_1+\mu p_2)+
(\lambda p_1+\mu p_2)b^2a+ab^2(\lambda p_1+\mu p_2)+ab^2a
$$
$$
=(\lambda p_1+\mu p_2)b^2(\lambda p_1+\mu p_2)=\lambda^2 p_1b^2p_1=\lambda^2b^2\neq {\bf 0}
$$
if $b^2\neq {\bf 0}$, for every $b\in \mathcal{N}$.
We also have
$$
(\lambda p_1+\mu p_2+a)(\nu p_1+\rho p_2)^2(\lambda p_1+\mu p_2 +a)
$$
$$
=(\lambda p_1+\mu p_2+a)(\nu^2 p_1+\rho^2 p_2)^2(\lambda p_1+\mu p_2+a)
$$
$$
=\lambda^2\nu^2 p_1+\mu^2\rho^2 p_2+c,
$$
where $c\in \mathcal{N}$ since $\mathcal{N}$ is an ideal. Therefore
$$
(\lambda p_1+\mu p_2+a)(\nu p_1+\rho p_2)^2(\lambda p_1+\mu p_2 e+a)\neq {\bf 0}
$$
if $\nu\neq 0$ or $\rho\neq 0$. Hence,
$$
^{\perp_q}\{\lambda p_1+\mu p_2+a\}\cap (\mathcal{S}^2\cup\mathcal{N}^2)=\{{\bf 0}\}={\bf 0}\mathcal{A}{\bf 0}\cap (\mathcal{S}^2\cup\mathcal{N}^2).
$$

Now we take an element of the form $\mu p_2+a$, $\mu\neq 0$ and
consider the annihilator $^{\perp_q}\{\mu p_2+a\}$. Since $\mathcal{N}$ is an ideal,
by the properties of the Peirce decomposition and (1.2), (1.5), we have
$$
(\mu p_2+a)b^2(\mu p_2 +a)=\mu p_2b^2\mu p_2+
\mu p_2b^2a+ab^2\mu p_2+ab^2a
$$
$$
=\mu p_2b^2\mu p_2={\bf 0}
$$
for every $b\in \mathcal{N}$.

By (1.3), (1.5), We also have
$$
(\mu p_2+a)(\nu p_1+\rho p_2)^2(\mu p_2 +a)
$$
$$
=(\mu p_2+a)(\nu^2 p_1+\rho^2 p_2)^2(\mu p_2+a)
$$
$$
=\mu^2\rho^2 p_2+\rho^2ap_2a+2\rho^2\mu p_2a\neq {\bf 0},
$$
if $\rho\neq 0$. Hence,
$$
^{\perp_q}\{\mu p_2+a\}\cap (\mathcal{S}^2\cup\mathcal{N}^2)=\{\mu^2p_1:\mu\in\mathbb{F}\}\cup\mathcal{N}^2.
$$
Since $p_1\mathcal{A}p_1\cap (\mathcal{S}^2\cup\mathcal{N}^2)=\{p_1,\mathcal{N}^2\}$ by (1.2), we have
$$
^{\perp_q}\{\mu p_2+a\}\cap (\mathcal{S}^2\cup\mathcal{N}^2)=p_1\mathcal{A}p_1\cap (\mathcal{S}^2\cup\mathcal{N}^2).
$$
So, $\mathcal{A}$ is an almost inner Rickart algebra. This ends the proof of the first case in the theorem.

Case $e=p_2$ is the same as case $e=p_1$. In this case, just $p_1$, $p_2$ have been swapped.

If there exists $a\in N$ such that ${^{\perp_q}\{a\}}\cap (\mathcal{S}^2\cap \mathcal{N}^2)=p_1\mathcal{A}p_1\cap (\mathcal{S}^2\cap \mathcal{N}^2)$ or ${^{\perp_q}\{a\}}\cap (\mathcal{S}^2\cap \mathcal{N}^2)=p_2\mathcal{A}p_2\cap (\mathcal{S}^2\cap \mathcal{N}^2)$ we are in some
of the previous cases. Thus, we can assume that, for every $a\in N$,
${^{\perp_q}\{a\}}\cap (\mathcal{S}^2\cap \mathcal{N}^2)=(p_1+p_2)\mathcal{A}(p_1+p_2)\cap (\mathcal{S}^2\cap \mathcal{N}^2)$.

Then, similar to the proof of Theorem \ref{1.4} we have the conditions (2.1),(2.2) and (2.4) are valid.

We prove (2.3).
Let $b$ be another nonzero element from the radical $\mathcal{N}$.
Then, by the assumption,
$$
x^2\in^{\perp_q}\{b\}\cap (\mathcal{S}^2\cup\mathcal{N}^2)=(p_1+p_2)\mathcal{A}(p_1+p_2)\cap (\mathcal{S}^2\cup\mathcal{N}^2), \,\, \, b(p_1+p_2)b={\bf 0}.
$$
Since $p_1+p_2\in f\mathcal{A}f\cap (\mathcal{S}^2\cup\mathcal{N}^2)$, we have
$p_1+p_2\in^{\perp_q}\{b\}$. Hence, $b(p_1+p_2)b={\bf 0}$. Since $b$ is arbitrarily chosen,
for any $i$, $j$, $(e_i+e_j)(p_1+p_2)(e_i+e_j)={\bf 0}$.
Hence, $e_i(p_1+p_2)e_j+e_j(p_1+p_2)e_i={\bf 0}$. So, (2.3) is valid.

Now we prove (2.5) and (2.6).
We have $\mathcal{N}^2\subseteq (p_1+p_2)\mathcal{A}(p_1+p_2)$ by (2.2).
Moreover, $\mathcal{S}^2\cup\mathcal{N}^2\subseteq (p_1+p_2)\mathcal{A}(p_1+p_2)$.
We have, for every $b\in \mathcal{N}$,
\[
{^{\perp_q}\{b\}}\cap (\mathcal{S}^2\cup\mathcal{N}^2)=(p_1+p_2)\mathcal{A}(p_1+p_2)\cap (\mathcal{S}^2\cup\mathcal{N}^2).
\]
Hence, for every $b\in \mathcal{N}$, we have
\[
bp_1b={\bf 0},  bp_2b={\bf 0}
\]
since $\mathcal{S}^2\subseteq (p_1+p_2)\mathcal{A}(p_1+p_2)$, i.e.,
$\mathcal{S}^2\subseteq {^{\perp_q}\{b\}}\cap (\mathcal{S}^2\cup\mathcal{N}^2)$.
Thus, $bp_kb=0$, $k=1,2$, for any $b\in \mathcal{N}$.
From this it follows that
$$
e_ip_ke_j+e_jp_ke_i={\bf 0}, \,\,\, k=1,2, \,\,\, i,j=1,2,\ldots,n.          \eqno{(2.6)}
$$

Now we take the annihilator $^{\perp_q}\{p_1\}$. Then there exists an idempotent $e\in\mathcal{S}$ such that
$$
^{\perp_q}\{p_1\}\cap (\mathcal{S}^2\cup\mathcal{N}^2)=e\mathcal{A}e\cap (\mathcal{S}^2\cup\mathcal{N}^2).
$$
Since $p_2\in ^{\perp_q}\{p_1\}$, we have $e=p_2$.
We have $\mathcal{N}^2\subseteq (p_1+p_2)\mathcal{A}(p_1+p_2)$.
From this it follows that, for any $x\in\mathcal{N}$,
\[
x^2=p_1x^2p_1+p_1x^2p_2+p_2x^2p_1+p_2x^2p_2.
\]
If $p_1x^2p_1=0$ then $x^2=p_2x^2p_2$, i.e.,
$p_1x^2p_2+p_2x^2p_1=0$. Thus, for any $x\in\mathcal{N}$, if $p_1x^2p_1=0$ then $p_1x^2p_2+p_2x^2p_1=0$.

Similarly, for any $x\in\mathcal{N}$, if $p_2x^2p_2=0$ then $p_1x^2p_2+p_2x^2p_1=0$.

Therefore, for any $x\in\mathcal{N}$ such that $x^2\neq 0$, one of the following three cases is valid:
1) if $p_1x^2p_1=0$ then $p_1x^2p_2+p_2x^2p_1=0$, 2) if $p_2x^2p_2=0$ then $p_1x^2p_2+p_2x^2p_1=0$
and 3) $p_1x^2p_1\neq 0$ and $p_2x^2p_2\neq 0$. From these it follows that (2.5) is valid.

Now let us prove that an associative algebra $\mathcal{A}$ with a basis $\{p_1,p_2,e_1,e_2,\dots,e_n\}$,
satisfying the properties (2.1)--(2.5), is an almost inner Rickart algebra.

Now we take the annihilator $^{\perp_q}\{p_1\}$. We prove that
$$
^{\perp_q}\{p_1\}\cap (\mathcal{S}^2\cup\mathcal{N}^2)=p_2\mathcal{A}p_2\cap (\mathcal{S}^2\cup\mathcal{N}^2).
$$
Clearly, $p_2\mathcal{A}p_2\cap (\mathcal{S}^2\cup\mathcal{N}^2)\subseteq ^{\perp_q}\{p_1\}\cap (\mathcal{S}^2\cup\mathcal{N}^2)$ and
$^{\perp_q}\{p_1\}\cap \mathcal{S}^2\subseteq p_2\mathcal{A}p_2\cap \mathcal{S}^2$. It is sufficient to prove
$^{\perp_q}\{p_1\}\cap \mathcal{N}^2\subseteq p_2\mathcal{A}p_2\cap \mathcal{N}^2$.
We take $x^2\in^{\perp_q}\{p_1\}\cap \mathcal{N}^2$. Then $p_1x^2p_1=0$ and
$p_1x^2p_2+p_2x^2p_1=0$ by (2.5). Hence, $x^2=p_2x^2p_2$ and $x^2\in p_2\mathcal{A}p_2\cap \mathcal{N}^2$.
So, $^{\perp_q}\{p_1\}\cap \mathcal{N}^2\subseteq p_2\mathcal{A}p_2\cap \mathcal{N}^2$.
Therefore, $^{\perp_q}\{p_1\}\cap (\mathcal{S}^2\cup\mathcal{N}^2)=p_2\mathcal{A}p_2\cap (\mathcal{S}^2\cup\mathcal{N}^2)$.

Similarly, we have
$$
^{\perp_q}\{p_2\}\cap (\mathcal{S}^2\cup\mathcal{N}^2)=p_1\mathcal{A}p_1\cap (\mathcal{S}^2\cup\mathcal{N}^2).
$$

Now we take an element of the form $\lambda p_1+\mu p_2+a$, $\lambda\neq 0$ or $\mu\neq 0$,
and $b\in \mathcal{N}$ such that $b^2\neq 0$.
Since $\mathcal{N}$ is an ideal,
by the properties of the Peirce decomposition, we have
$$
(\lambda p_1+\mu p_2+a)b^2(\lambda p_1+\mu p_2+a)
$$
$$
=(\lambda p_1+\mu p_2)b^2(\lambda p_1+\mu p_2)+
(\lambda p_1+\mu p_2)b^2a+ab^2(\lambda p_1+\mu p_2)+ab^2a
$$
$$
=(\lambda p_1+\mu p_2)b^2(\lambda p_1+\mu p_2)\neq {\bf 0}
$$
since $b^2\in (p_1+p_2)\mathcal{A}(p_1+p_2)$. We also have
$$
(\lambda p_1+\mu p_2+a)(\nu p_1+\rho p_2)^2(\lambda p_1+\mu p_2+a)=
$$
$$
(\lambda p_1+\mu p_2+a)(\nu^2 p_1+\rho^2 p_2)(\lambda p_1+\mu p_2+a)
$$
$$
=\lambda^2\nu^2 p_1+\mu^2\rho^2 p_2+c,
$$
where $c\in \mathcal{N}$ since $\mathcal{N}$ is an ideal. Therefore
$$
(\lambda p_1+\mu p_2+a)(\nu p_1+\rho p_2)^2(\lambda p_1+\mu p_2+a)\neq {\bf 0}
$$
if $\nu\neq 0$ or $\rho\neq 0$. Hence,
$$
^{\perp_q}\{\lambda p_1+\mu p_2+a\}\cap (\mathcal{S}^2\cup\mathcal{N}^2)=\{{\bf 0}\}={\bf 0}\mathcal{A}{\bf 0}\cap (\mathcal{S}^2\cup\mathcal{N}^2).
$$
So, $\mathcal{A}$ is an almost inner Rickart algebra. This ends the proof.
\end{proof}

\begin{remark}
Note that, in the case $e=p_1$ of the proof of Theorem \ref{2},
since $A$ is an almost inner Rickart algebra and $p_2\in {^{\perp_q}\{p_1\}}\cap (\mathcal{S}^2\cup\mathcal{N}^2)=
f\mathcal{A}f\cap (\mathcal{S}^2\cup\mathcal{N}^2)$
for some idempotent $f\neq {\bf 0}$ in $\mathcal{A}$ we have $f\in\mathcal{S}^2$ and $p_1fp_1={\bf 0}$. Hence, $p_1f={\bf 0}$ since $\mathcal{S}$
is commutative. So, $f=p_2$ and
\[
{^{\perp_q}\{p_1\}}\cap (\mathcal{S}^2\cup\mathcal{N}^2)=p_2\mathcal{A}p_2\cap (\mathcal{S}^2\cup\mathcal{N}^2).
\]
\end{remark}

The following proposition follows from the Jordan form of a
$3\times 3$ complex matrix.

\begin{proposition}  \label{333}
The associative algebra $M_3(\mathbb{C})$ does not have nilpotent
elements $a\in M_3(\mathbb{C})$ such that $a^3\neq {\bf 0}$.
\end{proposition}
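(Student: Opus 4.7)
The plan is to invoke the Jordan canonical form over $\mathbb{C}$, reduce to a nilpotent Jordan matrix, and then check the three possible block structures for a $3\times 3$ nilpotent Jordan matrix directly.

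First, I would recall that every matrix $a \in M_3(\mathbb{C})$ is similar to a matrix in Jordan normal form: there is an invertible $P \in M_3(\mathbb{C})$ with $P^{-1} a P = J$, where $J$ is block-diagonal with Jordan blocks. Since $a$ is nilpotent, every eigenvalue of $a$ is $0$, so every Jordan block of $J$ has eigenvalue $0$. Moreover, $a^3 = \mathbf{0}$ if and only if $J^3 = \mathbf{0}$, since $a^3 = P J^3 P^{-1}$.

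Next, I would enumerate the possibilities. The sizes of the Jordan blocks of $J$ form a partition of $3$, so there are exactly three cases:
\begin{enumerate}
\item[(i)] $J$ consists of three $1\times 1$ blocks, in which case $J = \mathbf{0}$ and hence $J^3 = \mathbf{0}$.
\item[(ii)] $J$ consists of one $2\times 2$ Jordan block and one $1\times 1$ block; then a direct computation gives $J^2 = \mathbf{0}$, so $J^3 = \mathbf{0}$.
\item[(iii)] $J$ is a single $3\times 3$ Jordan block with zero eigenvalue; here $J^3 = \mathbf{0}$ is immediate from the fact that the $k$th power of a $k\times k$ nilpotent Jordan block vanishes.
\end{enumerate}
In each case $J^3 = \mathbf{0}$, and therefore $a^3 = \mathbf{0}$, proving the proposition.

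There is no real obstacle; the only thing to handle carefully is the case analysis on Jordan block sizes, which is completely routine. (Alternatively, one could bypass the enumeration entirely via the Cayley--Hamilton theorem: the characteristic polynomial of a nilpotent $a \in M_3(\mathbb{C})$ is $x^3$, so $a^3 = \mathbf{0}$ at once. But since the proposition is explicitly framed in terms of the Jordan form, the block-by-block argument above is the most natural presentation.)
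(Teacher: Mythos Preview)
Your proof is correct and matches the paper's approach exactly: the paper gives no detailed argument beyond stating that the proposition ``follows from the Jordan form of a $3\times 3$ complex matrix,'' which is precisely the reduction you carry out. Your case analysis on the three partitions of $3$ (and the Cayley--Hamilton alternative you note) simply spells out what the paper leaves implicit.
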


\begin{remark}
If a finite-dimensional semisimple associative algebra is an almost inner Rickart algebra,
then this algebra is an inner RJ-algebra with respect to the Jordan multiplication.
By Proposition \ref{333} and the example on page 8 of \cite{AKh}
the condition of Lemma \ref{1.1} is not sufficient for an associative
algebra $\mathcal{A}$ to be an almost inner Rickart algebra.
\end{remark}

In the following tables associative algebras of dimension two and three
are listed. It is indicated that, wether an associative algebras is an almost inner Rickart algebra
or not for every associative algebra of the tables.

The fourth column of the tables indicates whether the appropriate associative algebra is
an almost inner Rickart algebra or not,
i.e., if yes, then sign "+" is put in the appropriate place of the column,
if not, then sign "-" is put in this place. All
notations of the tables 1 and 2 are taken from \cite{RRB}.


\begin{small}
\begin{center}
{\bf Table 1}

{\bf Two-dimensional associative algebras.}
\end{center}

\begin{center}
\begin{tabular}{|c|c|c|c|c|}
  \hline
  $J$ & Multiplication & Is this    & According to  \\
      &  table         & algebra an  &  which theorem?  \\
      &                & almost inner Rickart algebra?  &   \\

\hline
  $\mathcal{A}_2^1$    &  $n_1n_1=n_2$ & - & Theorem 2.4 \\
\hline
  $\mathcal{A}_2^2$    &  $e_1e_1=e_1$, & + & Theorem 2.5 \\
                       &  $e_1n_1=n_1$ &   &               \\
  \hline
  $\mathcal{A}_2^3$    & $e_1e_1=e_1$,   & + &  Theorem 2.5 \\
                       & $n_1e_1=n_1$   &   &                 \\
\hline
$\mathcal{A}_2^4$      & $e_1e_1=e_1$,   & + &  Theorem 2.5 \\
                       &$e_1n_1=n_1$, $n_1e_1=n_1$ &   &          \\
\hline
\end{tabular}
\end{center}
\end{small}

\bigskip

\newpage

\begin{small}
\begin{center}
{\bf Table 2}

{\bf Three-dimensional associative algebras.}
\end{center}

\begin{center}
\begin{tabular}{|c|c|c|c|c|}
  \hline
  $J$ & Multiplication & Is this           & According to      \\
      &  table         & algebra an inner  &  which theorem?   \\
      &                & Rickart algebra?     &                   \\

\hline
  $\mathcal{A}_3^1$         & $n_1n_3=n_2$, $n_3n_1=n_2$ & - & Theorem 2.4   \\
\hline
  $\mathcal{A}_3^2(\alpha)$ &  $n_1n_3=n_2$              & - & Theorem 2.4   \\
                            &$n_3n_1=\alpha n_2, \alpha\in \mathbb{C}\setminus\{1,-1\}$& &  \\
\hline
  $\mathcal{A}_3^2(\alpha)$ &  $n_1n_3=n_2$                & + & Theorem 2.4   \\
                            &$n_3n_1=\alpha n_2, \alpha=-1$ &   &  \\
\hline
  $\mathcal{A}_3^3$    & $n_1n_1=n_2$, $n_1n_2=n_3$  & - &  Theorem 2.4    \\
                       & $n_2n_1=n_3$                &   &                 \\
\hline
$\mathcal{A}_3^4$      & $n_1e_1=n_2$, $n_2e_1=n_2$ & + &   Theorem 2.5    \\
                       &$e_1e_1=e_1$                &   &                  \\
\hline
$\mathcal{A}_3^5$      & $n_2e_1=n_2$, $e_1n_1=n_1$  & + &  Theorem 2.5 \\
                       &$e_1e_1=e_1$                 &   &              \\
\hline
$\mathcal{A}_3^6$      & $e_1n_1=n_2$, $e_1n_2=n_2$ & + & Theorem 2.5 \\
                       & $e_1e_1=e_1$               &   &             \\
\hline
$\mathcal{A}_3^7$    & $n_1e_1=n_1$, $e_1e_1=e_1$ & +   & Theorem 2.8 \\
                     & $e_2n_1=n_1$, $e_2e_2=e_2$ &     & both case 1 and case2     \\
\hline
  $\mathcal{A}_3^8$  &  $n_1e_1=n_1$, $n_2e_1=n_2$ & + & Theorem 2.5 \\
                     &  $e_1n_1=n_1$, $e_1e_1=e_1$ &   &             \\
  \hline
  $\mathcal{A}_3^9$  & $n_2e_1=n_2$, $e_1n_1=n_1$  & + &  Theorem 2.5 \\
                     & $e_1n_2=n_2$, $e_1e_1=e_1$  &   &               \\
\hline
$\mathcal{A}_3^{10}$ & $n_1e_1=n_1$, $n_2e_1=n_2$               & + & Theorem 2.5 \\
                     & $e_1n_1=n_1$, $e_1n_2=n_2$, $e_1e_1=e_1$ &   &              \\
\hline
$\mathcal{A}_3^{11}$ &  $n_1e_1=n_2$, $n_2e_1=n_2$               & + & Theorem 2.5  \\
                     &  $e_1n_1=n_2$, $e_1n_2=n_2$, $e_1e_1=e_1$ &   &               \\
\hline
  $\mathcal{A}_3^{12}$  & $n_1n_1=n_2$, $n_1e_1=n_1$, $n_2e_1=n_2$  & - &  Theorem 2.5 \\
                        &$e_1n_1=n_1$, $e_1n_2=n_2$, $e_1e_1=e_1$   &   &              \\
\hline
\end{tabular}
\end{center}
\end{small}

\medskip

We are sincerely grateful to the reviewers for their careful reading of our article and valuable comments. It should be noted that, thanks to the reviewers, a number of significant shortcomings of the article were corrected. We would like to express our gratitude to one of the reviewers who worked diligently to identify errors from the beginning of the article to its end. Thanks to that reviewer, the main results and their proofs have been corrected.

\bigskip

\end{document}